\newtheorem{conjecture}{Conjecture}
\newtheorem{theorem}{Theorem}[section]
\newtheorem{lemma}[theorem]{Lemma}
\newtheorem{proposition}[theorem]{Proposition}
\newtheorem{corollary}[theorem]{Corollary}
\theoremstyle{definition}
\newtheorem{definition}[theorem]{Definition}
\def\NN{\mathbb{N}}
\def\CC{\mathbb{C}}
\def\d{\,\mathrm{d}}
\def\N{\mathcal{N}}
\def\MM{\mathcal{M}}
\def\RR{\mathbb{R}}
\def\TT{\mathbb{T}}
\def\ZZ{\mathbb{Z}}
\def\M{\mathscr{M}}
\def\tri{\,\triangle\,}
\newcommand{\al}{\alpha}
\newcommand{\be}{\beta}
\newcommand{\Ga}{\Gamma}
\newcommand{\De}{\Delta}
\newcommand{\La}{\Lambda}
\newcommand{\om}{\omega}
\newcommand{\Om}{\Omega}
\newcommand{\eps}{\epsilon}
\newcommand{\cp}{\mathcal P}
\newcommand{\ch}{\mathcal H}
\newcommand{\cn}{\mathcal N}
\newcommand{\wt}{\widetilde}
\newcommand{\wh}{\widehat}
\newcommand{\ZC}{\mathbb{C}}
\newcommand{\ZN}{\mathbb{N}}
\newcommand{\ZS}{\mathbb{S}}
\newcommand{\ti}{\tilde}
\title{A note on the largest sum-free sets of integers}
\author{Yifan Jing}
\address{Mathematical Institute, University of Oxford, UK}
\email{yifan.jing@maths.ox.ac.uk}
\author{Shukun Wu}
\address{Department of Mathematics, Department of Mathematics, California Institute of Technology, USA}
\email{skwu@caltech.edu}
\thanks{YJ was supported by Ben Green’s Simons Investigator Grant, ID:376201, by the Arnold O. Beckman Research Award (UIUC Campus Research Board RB21011), by the University Fellowship from UIUC, and by the Trijitzinsky Fellowship.}
\thanks{SW was supported by the Arnold O. Beckman Research Award (UIUC Campus Research Board RB21011).}
\subjclass[2010]{Primary 11B30; Secondary  11K70}
\date{}
\begin{document}

\begin{abstract}
Given $A$ a set of $N$ positive integers, an old question in additive combinatorics asks that whether $A$ contains a sum-free subset of size at least $N/3+\omega(N)$ for some increasing unbounded function $\omega$. The question is generally attacked in the literature by considering another conjecture, which asserts that as $N\to\infty$, $\max_{x\in\RR/\ZZ}\sum_{n\in A}(\mathbbm{1}_{(1/3,2/3)}-1/3)(nx)\to\infty$. 
This conjecture, if true, would also imply that a similar phenomenon occurs for $(2k,4k)$-sum-free sets for every $k\geq1$. In this note, we prove the latter result directly. The new ingredient of our proof is a structural analysis on the host set $A$, which might be of independent interest.
\end{abstract}
\maketitle

\section{Introduction}

Given $X$ a set of positive integers, we say $X$ is \emph{sum-free} if there does not exist $x_1,x_2,y_1$ in $X$ with $x_1+x_2= y_1$. The study of sum-free sets can be tracked back to Schur~\cite{Schur}, where he used extremal properties of sum-free sets 
to prove that the Fermat's last theorem does not hold in $\mathbb{F}_p$. 

Let $A$ be a finite subset of an abelian group $G$. A natural question is:  How large must the maximal sum-free subset of $A$ be? We use $\mathscr{M}_{(2,1)}(A)$ in this note to denote the size of maximal sum-free subset of $A$. When $A=G$, interest in determining $\mathscr{M}_{(2,1)}(G)$ for finite abelian groups $G$ goes back over 50 years. In 1969, Diananda and Yap~\cite{DY69} determined the size of the maximal sum-free set in $G$ whenever $|G|$ has a prime factor $p\not\equiv 1\pmod 3$, and the question is completely solved for all finite abelian groups recently by Green and Ruzsa~\cite{BR05}.
There is a large body of literature on extremal problems of sum-free sets. 
For example, see \cite{ABMS, BLST, G04, LS21} for counting sum-free sets, and \cite{JL20, HJ18, Hongliu} for the Erd\H os--Rothschild problems of sum-free sets. 

In this note, we are interested in finding the largest sum-free subset of $A$ when $A$ is a set of $N$ integers. More precisely, we define
\[
\mathscr{M}_{(2,1)}(N)=\inf_{A\subseteq \ZZ^{>0},|A|=N}\mathscr{M}_{(2,1)}(A).
\]

The study of $\mathscr{M}_{(2,1)}(N)$ originates with
Erd\H os~\cite{Erdos65}, who showed that $\mathscr{M}_{(2,1)}(N)\geq N/3$. An upper bound of the
same asymptotic quality was achieved in a recent breakthrough in the
upper bound by Eberhard, Green, and Manners~\cite{EGM}, who showed that
$\mathscr{M}_{(2,1)}(N)\leq N/3+o(N)$. Achieving any substantial improvement to
Erd\H os' lower bound is a long-standing open problem. In particular
the following conjecture is made in a series of papers \cite{Bourgain97,EGM,Erdos65}; for more background we refer to the recent survey by Tao and Vu~\cite{TV17}. 

\begin{conjecture}[The sum-free conjecture, combinatorial form]\label{conj:main}
There is a function $\omega(N)\to\infty$ as $N\to\infty$, such that
\[
\mathscr{M}_{(2,1)}(N)\geq\frac{N}{3}+\omega(N). 
\]
\end{conjecture}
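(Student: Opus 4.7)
The statement is a long-standing open conjecture, so any plan here is necessarily exploratory. I would pursue the Fourier-analytic reduction implicit in the abstract. Put $f := \mathbbm{1}_{(1/3,2/3)} - 1/3$ on $\RR/\ZZ$. For any $x$ the set $\{n\in A : nx \bmod 1 \in (1/3,2/3)\}$ is sum-free (if $n_1x, n_2x \in (1/3,2/3)$ mod $1$, then $(n_1+n_2)x \in (2/3,4/3)$ mod $1$, disjoint from $(1/3,2/3)$), and averaging its indicator sum over a uniform $x\in\RR/\ZZ$ gives mean $N/3$. Hence Conjecture~\ref{conj:main} reduces to
\[
\max_{x\in\RR/\ZZ}\, \sum_{n\in A} f(nx) \;\longrightarrow\; \infty
\]
as $N\to\infty$, uniformly over $A\subset\ZZ^{>0}$ with $|A|=N$; any unbounded excess over the mean is exactly the desired $\omega(N)$.

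The plan is a Fourier expansion $f(t)=\sum_{k\neq 0}\hat f(k)\,e(kt)$, where a direct computation gives $\hat f(k)=0$ for $3\mid k$ and $|\hat f(k)|\asymp 1/|k|$ otherwise. Setting $S_A(\alpha):=\sum_{n\in A}e(n\alpha)$, the target becomes
\[
F(x)\;:=\;\sum_{n\in A}f(nx)\;=\;\sum_{k\neq 0}\hat f(k)\,S_A(kx),
\]
and the task is to produce a single $x$ at which enough low-frequency terms align constructively. I would attempt a structure vs.\ pseudorandomness dichotomy on $A$. If the spectrum $\mathrm{Spec}_\delta(A):=\{\alpha : |S_A(\alpha)|\ge\delta N\}$ is rich --- say it contains a nontrivial Bohr neighbourhood or a long arithmetic progression --- classical Bohr-set technology should let me choose $x$ so that many of the dilates $kx$ land simultaneously near high-magnitude points of the spectrum with matching phases, forcing $|F(x)|$ to grow with $N$. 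If instead $A$ is spectrally flat, I would pursue a second- (and higher-) moment calculation: bound $\int |F|^2$ via the additive energy of $A$ and combine with concentration to upgrade an $L^2$ lower bound into an $L^\infty$ one for $\max|F|$.

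The hardest case, and the reason the conjecture remains open, is the intermediate regime in which $A$ has many medium-sized Fourier coefficients but no single dominant one: neither branch of the dichotomy then yields an unbounded gain, because phase alignment across the dilates $\{kx\}_k$ requires delicate arithmetic coincidences in the spectrum, while the $L^2$-to-$L^\infty$ passage is precisely where the Eberhard--Green--Manners argument stops short of quantifying its $o(N)$ error term. It is exactly to sidestep this obstruction that the present paper instead treats the $(2k,4k)$-sum-free variant, where an extra dilation parameter supplies arithmetic averaging unavailable in the plain $(2,1)$-case; the structural analysis of $A$ announced in the abstract is presumably what replaces the missing phase-alignment step there.
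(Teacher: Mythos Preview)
The paper does not prove this statement; it is listed as an open conjecture, with the best known bound $\mathscr{M}_{(2,1)}(N)\ge(N+2)/3$ due to Bourgain. There is therefore no paper-side proof to compare against, and you are right to frame the proposal as exploratory.

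Your Fourier reduction and the form of $\hat f(k)$ are correct and match the paper's setup in Section~2. Your proposed dichotomy, however---spectral richness versus flatness of $S_A$---is not the one the paper employs for its $(2,4)$-result, and your closing speculation about ``an extra dilation parameter'' misses the actual mechanism. The paper's dichotomy is \emph{multiplicative}: it splits on whether $|A\,\triangle\,(3\!\cdot\! A)|$ is small (``geometric'') or large. In the geometric case $A$ is $(3,c)$-lacunary, and a Littlewood--Paley argument gives $\big\|\sum_{m\in A}e(m\cdot)\big\|_{L^1}\gg N^{c/3}$; this transfers to $\|F\|_{L^1}$ via a M\"obius sieve exploiting that $\hat f(n)$ carries the multiplicative character $\chi\bmod 3$, and then $\max F\ge\tfrac12\|F\|_{L^1}$ since $\int F=0$. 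This branch in fact applies verbatim to the $(2,1)$-problem. The genuinely $(2,4)$-specific step is the non-geometric branch: there the paper uses \emph{two} maximal $(2,4)$-free intervals $\Omega_1=(1/6,1/3)$ and $\Omega_2=(2/3,5/6)$ and works with $f_1\pm f_2$, whose Fourier coefficients can be sieved so that the main term is supported on $B=A\,\triangle\,(3\!\cdot\! A)$; a McGehee--Pigno--Smith-type test function then extracts $\log N/\log\log N$ from $|B|\gg N^c$. Neither branch uses an $L^2$-to-$L^\infty$ upgrade of the kind you propose; the engine throughout is an $L^1$ lower bound plugged into $\max F\ge\tfrac12\|F\|_{L^1}$.

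The obstruction for the $(2,1)$-case is therefore not quite the phase-alignment issue you describe but the absence of a companion interval: with only $(1/3,2/3)$ there is no $f_1\pm f_2$ trick available, the factor $\sin(n\pi/3)$ in $\hat f$ is already the multiplicative character, and the sieve cannot be re-engineered to isolate a main term over $A\,\triangle\,(3\!\cdot\! A)$ in the non-geometric case.
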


In \cite{Erdos65}, using a probabilistic argument, Erd\H os showed that $\mathscr{M}_{(2,1)}(N)\geq N/3$. This argument is actually not complicated: Let $\Omega$ be a sum-free subset of $\RR/\ZZ$, for example, the interval $(1/3,2/3)$, then $(\Omega+\Omega)\cap\Omega=\varnothing$. \footnote{In fact by Kneser's inequality~\cite{Kneser}, we have an upper bound on $|\Omega|$ that $|\Omega|\leq1/3$.} 
For any $x\in\RR/\ZZ$, we let $A_x$  be the set of integers $n$ in $A$ such that $nx\in\Omega$. Then clearly $A_x$ is sum-free, and hence we have
\[
\mathscr{M}_{(2,1)}(A)\geq |A_x|= \sum_{n\in A}\mathbbm{1}_\Omega(nx),
\]
where $\mathbbm{1}_\Omega$ is the characteristic function of $\Omega$. 
When $x$ is chosen randomly from $\RR/\ZZ$, the expected size of $A_x$ is $N/3$, which implies that $|A_x|\geq N/3$ for some $x$. 

The lower bound estimate of $\max_{x\in\RR/\ZZ}\sum_{n\in A}\mathbbm{1}_{\Omega}(nx)$ was later improved to $(N+1)/3$ by Alon and Kleitman~\cite{AK90}, and the best estimate to date was obtained by Bourgain~\cite{Bourgain97}, who showed that $\max_{x\in\RR/\ZZ}\sum_{n\in A}\mathbbm{1}_{\Omega}(nx)\geq (N+2)/3$. Recently Shakan \cite{Shakan22} obtained a different proof for the bound $(N+2)/3$. 
In particular, the following conjecture would imply Conjecture~\ref{conj:main}.

\begin{conjecture}[The sum-free conjecture, analytic form]\label{conj:main2}
There is a function $\omega(N)\to\infty$ as $N\to\infty$, such that for every set $A$ of $N$ positive integers, there exists a maximal sum-free set $\Omega\subseteq\RR/\ZZ$ that
\[
\max_{x\in\RR/\ZZ}\sum_{n\in A}\Big(\mathbbm{1}_{\Omega}-\frac{1}{3}\Big)(nx)=\omega(N).
\]
\end{conjecture}

The analogous conjectures for $(k,\ell)$-sum-free sets are also well-studied. By \emph{$(k,\ell)$-sum-free} we mean that for every $k+\ell$ elements $x_1,\dots,x_k,y_1,\dots,y_\ell$, we always have $\sum_{i=1}^k x_i\neq\sum_{j=1}^\ell y_j$ (throughout the paper we always assume that $k<\ell$). We can similar define $\mathscr{M}_{(k,\ell)}(A)$ to be the size of a maximal $(k,\ell)$-sum-free subset of $A$, and let
\[
\mathscr{M}_{(k,\ell)}(N)=\inf_{A\subseteq \ZZ^{>0},|A|=N}\mathscr{M}_{(k,\ell)}(A).
\]
Recently, Eberhard~\cite{Eberhard15} showed that $\mathscr{M}_{(1,k)}(N)=N/(1+k)+o(N)$. The result is later generalized by the authors~\cite{JW20} for all $(k,\ell)$, that $\mathscr{M}_{(k,\ell)}(N)=N/(k+\ell)+o(N)$. It remains to show whether $\mathscr{M}_{(k,\ell)}(N)\geq N/(k+\ell)+\omega(N)$ for every pair $(k,\ell)$, where $\omega(N)$ is a function that tends to infinity as $N$ tends to infinity. Using the probabilistic argument by Erd\H os, one can immediately get $\mathscr{M}_{(k,\ell)}(N)\geq N/(k+\ell)$. In general, we believe the following should be true, which is a generalization of Conjecture~\ref{conj:main2} to all $(k,\ell)$-sum-free sets.

\begin{conjecture}\label{conj:main3}
There is a function $\omega(N)\to\infty$ as $N\to\infty$, such that for every set $A$ of $N$ positive integers, there exists a maximal $(k,\ell)$-sum-free set $\Omega(k,\ell)\subseteq\RR/\ZZ$, and we have
\[
\max_{x\in\RR/\ZZ}\sum_{n\in A}\Big(\mathbbm{1}_{\Omega(k,\ell)}-\frac{1}{k+\ell}\Big)(nx)=\omega(N).
\]
\end{conjecture}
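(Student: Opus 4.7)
The plan is to adapt the Fourier-analytic framework that underlies the partial progress on Conjecture \ref{conj:main2} to the general $(k,\ell)$-setting. First, fix a canonical maximal $(k,\ell)$-sum-free set $\Omega := \Omega(k,\ell) \subseteq \mathbb{R}/\mathbb{Z}$ of measure $1/(k+\ell)$ and expand
\[
f := \mathbbm{1}_{\Omega} - \frac{1}{k+\ell} = \sum_{j \neq 0} c_j \, e(j\cdot), \qquad e(y) := e^{2\pi i y}.
\]
With $\widehat{A}(\xi) := \sum_{n \in A} e(n\xi)$, the quantity of interest is
\[
S(x) := \sum_{n \in A} f(nx) = \sum_{j \neq 0} c_j \, \widehat{A}(jx),
\]
and the task is to show $\max_{x \in \mathbb{R}/\mathbb{Z}} |S(x)| \to \infty$ as $N = |A| \to \infty$. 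A preliminary step is to verify that the set $\Omega(k,\ell)$ can be chosen so that its nonzero Fourier support is sufficiently rich; this is straightforward when $k+\ell$ is highly composite, but requires care otherwise.

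Next I would run a structural dichotomy on the host set $A$, in the spirit of the structural analysis announced in the abstract. In the \emph{structured} case, suppose $|\widehat{A}(\xi)| \geq \delta N$ for some nontrivial $\xi$ and threshold $\delta = \delta(k,\ell) > 0$. Choosing an index $j_0$ with $|c_{j_0}|$ bounded away from zero and $x$ with $j_0 x \equiv \xi$, the term $c_{j_0}\widehat{A}(j_0 x)$ should dominate $S(x)$, yielding $|S(x)| \gg_{\delta} N$. In the \emph{unstructured} case where $\sup_{\xi \neq 0}|\widehat{A}(\xi)| = o(N)$, one computes
\[
\int_{0}^{1} |S(x)|^2 \, dx = \sum_{j,j' \neq 0} c_j \, \overline{c_{j'}} \cdot \#\bigl\{(n,n') \in A \times A : jn = j' n' \bigr\}
\]
and argues that the contribution grows unboundedly with $N$, so that a Markov-type extraction selects some $x$ with $|S(x)|$ large. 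Intermediate regimes of partial structure should be handled by a density-increment iteration, replacing $A$ with a subset in a long arithmetic progression and rescaling.

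The main obstacle lies in the structured case. The coefficients $c_j$ vanish on certain arithmetic progressions modulo $k+\ell$ (for example $c_j = 0$ whenever $j \equiv 0 \pmod{3}$ when $\Omega = (1/3, 2/3)$), so if the dominant frequency $\xi$ of $A$ is such that every admissible pairing $j_0 x \equiv \xi$ forces $j_0$ into the zero-set of $\{c_j\}$, the single-term dominance argument collapses. To get around this one needs a refined structure theorem on $A$: either $A$ correlates with a progression of common difference coprime to $k+\ell$, in which case the argument proceeds, or $A$ is essentially contained in a sub-progression along which one can rescale, removing the offending arithmetic relation, and iterate. For the family $(k,\ell) = (2k, 4k)$, the factorization $6k = 2 \cdot 3k$ leaves enough admissible frequencies in the Fourier support of $\Omega$ for this loop to close, which is presumably why the paper succeeds in that range; extending the argument to arbitrary $(k,\ell)$, especially when $k+\ell$ is prime and the zero-set of $\{c_j\}$ is rigid, appears to require genuinely new structural input on $A$, which is why I expect Conjecture \ref{conj:main3} to remain open in full generality beyond the $(2k,4k)$ cases.
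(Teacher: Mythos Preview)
First, note that the statement is a \emph{conjecture}: the paper does not prove it in general, only the special case $(k,\ell)=(2k,4k)$ (Theorem~\ref{thm:main}). Your proposal is therefore not being compared to a full proof but to the partial result and its method.

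Your sketch has a genuine gap in the ``structured'' case. Writing $S(x)=\sum_{j\neq0}c_j\widehat{A}(jx)$ and choosing $x$ so that $j_0x\equiv\xi$ with $|\widehat{A}(\xi)|\geq\delta N$ does \emph{not} make $c_{j_0}\widehat{A}(j_0x)$ dominate: the remaining terms $c_j\widehat{A}(jx)$ for $j\neq j_0$ are still present, each of size up to $N$, and there is no mechanism in your argument preventing cancellation. This is exactly the difficulty that makes even Conjecture~\ref{conj:main2} hard. Your ``unstructured'' case is likewise incomplete: computing $\int_0^1|S(x)|^2\,dx$ gives a diagonal contribution of order $N\|f\|_2^2$, but without control on $\|S\|_1$ or on the off-diagonal terms (which can be negative), this does not force $\max_x|S(x)|\to\infty$; indeed, $\|S\|_2^2\asymp N$ together with the trivial bound $\|S\|_\infty\leq N$ yields nothing. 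The density-increment iteration you allude to for intermediate cases is not specified at all.

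For comparison, the paper's actual dichotomy for the $(2k,4k)$ case is of a completely different nature: it is \emph{multiplicative}, based on whether $|A\tri(3\cdot A)|$ is large or small, not on the size of additive Fourier coefficients. When $|A\tri(3\cdot A)|$ is small, $A$ is $(3,c)$-lacunary and a Littlewood--Paley square-function estimate gives a polynomial lower bound on $\|\sum_{m\in A}e(mx)\|_1$. When $|A\tri(3\cdot A)|$ is large, the paper combines two balanced functions $f_1,f_2$ for the intervals $(1/6,1/3)$ and $(2/3,5/6)$, applies a M\"obius sieve to strip small prime factors from the Fourier coefficients, and then invokes a McGehee--Pigno--Smith--type test-function construction (Lemma~\ref{technical-lemma}) on the symmetric difference $B=A\tri(3\cdot A)$. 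Neither branch resembles your Fourier-coefficient dichotomy, and the specific algebraic coincidence that makes the $(2,4)$ case work---that the two intervals can be recombined into a single exponential sum over $B$ as in \eqref{final}---has no analogue in your outline.
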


Conjecture~\ref{conj:main3} is verified by Bourgain~\cite{Bourgain97} for $(1,3)$-sum-free sets, and recently by the authors~\cite{JW20} for $(k,5k)$-sum-free sets for every $k\geq1$. In particular, the authors observed that if Conjecture~\ref{conj:main3} holds for $(k,\ell)$-sum-free sets, then it also holds for $(km,\ell m)$-sum-free sets for every $m\geq2$. Thus, if Conjecture~\ref{conj:main2} holds, then this would imply that Conjecture~\ref{conj:main3} holds for $(2k,4k)$-sum-free sets for all $k\geq1$. Hence the authors believe that the $(2k,4k)$-sum-free problem is one of the most interesting cases of Conjecture~\ref{conj:main3}. 
 In this paper, we prove the $(2k,4k)$-sum-free case without assuming Conjecture~\ref{conj:main2}. 

\begin{theorem}\label{thm:main}
For every $k\geq1$, there is a function $\omega(N)=\log N/\log\log N$, such that for every set $A$ of $N$ positive integers, there exists a maximal $(2k,4k)$-sum-free set $\Omega(2k,4k)\subseteq\RR/\ZZ$, and we have
\[
\max_{x\in\RR/\ZZ}\sum_{n\in A}\Big(\mathbbm{1}_{\Omega(2k,4k)}-\frac{1}{6k}\Big)(nx)\gg \omega(N).
\]
As a consequence, there is an absolute  constant $c>0$, such that 
\[
\mathscr{M}_{(2k,4k)}(N)\geq \frac{N}{6k}+c\,\omega(N).
\]
\end{theorem}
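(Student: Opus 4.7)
The plan is to adapt the Fourier-analytic approach of Erd\H{o}s and Bourgain to the $(2k,4k)$-setting and to feed in a new structural decomposition of the host set $A$ that recovers a logarithmic gain without invoking Conjecture~\ref{conj:main2}. I would take as the canonical maximal $(2k,4k)$-sum-free subset of $\RR/\ZZ$ the interval $\Omega(2k,4k):=(1/(6k),1/(3k))$; a direct check shows $(2k)\Omega(2k,4k)=(1/3,2/3)$ and $(4k)\Omega(2k,4k)\equiv(2/3,1)\cup(0,1/3)\pmod 1$, confirming that $\Omega(2k,4k)$ is $(2k,4k)$-sum-free of measure $1/(6k)$. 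Setting $f:=\mathbbm{1}_{\Omega(2k,4k)}-1/(6k)$, the Fourier coefficients satisfy $\hat f(j)=\sin(\pi j/(6k))\,e(-j/(4k))/(\pi j)$ and vanish precisely when $6k\mid j$. The target sum is then
\[
\max_{x\in\RR/\ZZ}\sum_{n\in A}f(nx)\;=\;\max_{x\in\RR/\ZZ}\sum_{j\neq 0}\hat f(j)\,S_A(jx),\qquad S_A(\theta):=\sum_{n\in A}e(n\theta),
\]
which I would aim to lower bound by $\gg\log N/\log\log N$.

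The heart of the argument is a structural analysis of $A$. The plan is to partition $A$ according to the dyadic divisibility classes at the primes dividing $6k$ (and more generally according to a suitable multiplicative invariant of each $n\in A$) and then pigeonhole to extract a large subset $A^*\subseteq A$ together with an integer $d$ such that every $n\in A^*$ factors as $n=d\cdot m_n$ with the quotients $\{m_n\}$ sitting in a cleaner class --- for instance, coprime to $6k$ or with controlled largest prime factor. The change of variable $x\mapsto x/d$ in the character sum then replaces the original $(2k,4k)$-problem on $A$ by a problem of the same form on $\{m_n\}$, at the cost of at most a logarithmic factor in the size of the working set. Run at the right scale, or iterated a bounded number of times, this reduction produces a normalized host set on which a direct Fourier-pigeonhole argument can be deployed.

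Step two is a Dirichlet-type approximation on the reduced set: exploiting the vanishing of $\hat f$ on multiples of $6k$ together with the structural information on $\{m_n\}$, I would choose $x$ so that the phases of $\hat f(j)S_A(jx)$ align constructively across a dyadic range of small frequencies $j$, producing a positive contribution of size $\gg\log N/\log\log N$. The appearance of this specific rate is tied to the maximal order of the divisor function on $n\leq \max A$, which is $\exp(O(\log N/\log\log N))$: the pigeonhole in the structural step traverses at most this many divisibility levels, each contributing a bounded amount.

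The principal obstacle is the structural step itself: the dichotomy has to succeed uniformly over adversarial hosts --- random sum-free sets, arithmetic-progression-like sets, and subsets of the primes --- and it must be fully compatible with the Fourier expansion above. The key quantitative estimate, namely that an arbitrary $A\subseteq\ZZ^{>0}$ of size $N$ either admits a large subset with a common divisibility structure or a large Fourier mass concentrated at a rational of small denominator, is where the novel \emph{structural analysis on the host set $A$} announced in the abstract must do its real work, and I expect it to be the main technical hurdle of the argument.
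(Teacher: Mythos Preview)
Your proposal is a plausible research outline, but it is not a proof, and its key structural step does not match what actually drives the argument. You correctly identify a maximal $(2k,4k)$-sum-free interval and set up the Fourier expansion, but from there you only \emph{hope} that a divisibility-based pigeonhole on primes dividing $6k$ will produce the required gain, and you explicitly flag this as the main hurdle. That hope does not pan out: partitioning $A$ by $p$-adic valuations at the primes dividing $6k$ gives no useful control over $\sum_{n\in A} f(nx)$, because the Fourier coefficients $\hat f(j)$ are governed by $j\bmod 6k$, not by the arithmetic of $n\in A$. In particular, your heuristic tying $\log N/\log\log N$ to the maximal order of the divisor function has no counterpart in a working argument.

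The paper's route is genuinely different in both the dichotomy and the tools. The structural split is on the size of $|A\,\triangle\,(3\cdot A)|$: if this is small (the ``geometric'' case), $A$ is $(3,c)$-lacunary and a Littlewood--Paley square-function estimate gives a polynomial gain even for the ordinary sum-free problem; if it is large, the paper works with \emph{two} $(2,4)$-sum-free intervals $\Omega_1=(1/6,1/3)$ and $\Omega_2=(2/3,5/6)$, forms $f_1\pm f_2$, sieves out small prime factors, and feeds the resulting exponential sum into a weighted McGehee--Pigno--Smith/Bourgain-type $L^1$ lower bound (Lemma~\ref{technical-lemma} and Corollary~\ref{cor: key}). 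The set $B=A\,\triangle\,(3\cdot A)$ is exactly what survives after the $\sin$ and $\cos$ terms combine, and its largeness is what makes the Littlewood-conjecture machinery bite. The $\log N/\log\log N$ loss arises from Mertens' estimate $\sum_{t\in\cn_2}1/t\asymp\log\log N$ in the sieve, not from divisor counts. Finally, the passage from $(2,4)$ to $(2k,4k)$ is a separate pigeonhole over the preimage of the homomorphism $x\mapsto kx$, applied only at the end. None of these ingredients appears in your sketch.
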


The new ingredients used in proving Theorem~\ref{thm:main} contain a structural analysis of the given set $A$. 
Recall that 
a F\o lner sequence in $(\NN,\cdot)$ is a collection of sets of integers $\{F_n\}_{n=1}^\infty$, such that for every $a\in\NN^{>0}$,
\[
\lim_{n\to\infty}\frac{|F_n\tri (a\cdot F_n)|}{|F_n|}=0.
\]
Thus, when $A$ is close to a set in a F\o lner sequence, we expect that $|A\tri (a\cdot A)|$ is small for appropriate $a$. Inspired by the structure of F\o lner sequences (which is the only known constructive example whose largest $(k,\ell)$-sum-free subsets have cardinality $N/(k + \ell) + o(N)$, for all $(k, \ell)$, see~\cite{Eberhard15,JW20}), we split our proof into two cases: when $|A\tri (a\cdot A)|$ is small (close to having multiplicative structures), and when $|A\tri (a\cdot A)|$ is large (far away from having multiplicative structures). We mainly consider the case $a=3$ here since the Fourier coefficients appearing in the later proofs contain a multiplicative character mod $3$. The first case is resolved by an application of the Littlewood--Paley theorem, and the proof we given also works for sum-free sets. In the second case, since the main factors in the Fourier coefficients are not multiplicative, we carefully sieve out small prime factors, and apply a variant of the weak Littlewood conjecture. The nontrivial lower bound of $\om(N)$ eventually comes from the largeness of $|A\tri (3\cdot A)|$. For convenience, we make the following definition.

\begin{definition}\label{def: geometric}
We say a set $A\subseteq \ZZ^{>0}$ is a {\bf $(n,c)$-geometric set}, if $|A\tri (n\cdot A)|\ll |A|^c$ for an absolute constant $c<1$.
\end{definition}

The paper is organized as follows. In the next section, we deal with the case when $|A\tri (3\!\cdot\! A)|$ is small (we actually prove a more general result there). In Section 3, we prove a generalized version of the weak Littlewood conjecture, which is used later in the proof. In Section 4, we prove the case when $|A\tri (3\cdot A)|$ is large, and finish the proof of Theorem~\ref{thm:main}.

\subsection*{Notation} Given a set $A$ and a positive integer $k$, we use $kA$ to denote the set $\{a_1+\dots+a_k:a_i\in A \text{ for }1\leq i\leq k\},$ and use $k\cdot A$ to denote the set $\{ka:a\in A\}$. For every $\theta\in\RR/\ZZ$, we write $e(\theta)=e^{2\pi i \theta}$.
We use the standard Vinogradov notation. That is, $f\ll g$ means $f=O(g)$, and $f\asymp g$ if $f\ll g$ and $f\gg g$.


\section{When $A$ is geometric}

In this section, we study the size of the largest sum-free sets when the host set $A$ is structured. Let us first recall that in \cite[Proposition 1.4]{Bourgain97}, Bourgain proved that 
\begin{equation}\label{eq: bourgain}
\mathscr{M}_{(2,1)}(N)\geq\frac{N}{3}+c(\log N)^{-1}\left\|\sum_{m\in A}\cos(2\pi m \theta)\right\|_{L^1(\RR/\ZZ)}.
\end{equation}
Hence if the $L^1$-norm of $\sum_{m\in A}\cos(2\pi m \theta)$ is large, and we will establish such a lower bound when $A$ is geometric (see Definition \ref{def: geometric}). We remark that if $A$ is not geometric in general the $L^1$-norm of $\sum_{m\in A}\cos(2\pi m \theta)$ can be $\ll \log N$. 

\begin{definition}
    Let $\cp_{3,n}$ be the collection of intervals $[n+3^k,n+3^{k+1})\cap\NN$, where $k\geq0$ is an integer and $n\in\ZN$ is a shift. Let $A$ be a set of $N$ positive integers. We say that $A$ is {\bf $(3,c)$-lacunary}, if there is an $n\in\ZN$ and a subset $\cp\subset\cp_{3,n}$ with $|\cp|\gg N^c$, such that each interval in $\cp$ contains at least one element of $A$, and the intervals in $\cp$ form a cover of $A$.
\end{definition}
 We have the following observation.

\begin{lemma}\label{lem: geometric implies lacunary}
    Let $c>0$ and let $A$ be a set of $N$ positive integers. If $A$ is $(3,c)$-geometric, then $A$ is $(3,1-c)$-lacunary. 
\end{lemma} 
\begin{proof}
    Let $\cp_{3,0}$ be the collection of intervals $I_k:=[3^k,3^{k+1})\cap\NN$ for integers $k\geq 0$. Let $I$ be the set of indices that $I_k\cap A\neq\varnothing$ for $k\in I$. Let us also partition positive integers into collection of $3$-chains
    \[
    \ZZ_+=\bigcup_{x\in\ZZ_+,3\nmid x}\mathcal{C}(x), \quad\text{where }\mathcal{C}(x)=\bigcup_{i=0}^\infty \{3^ix\}. 
    \]
  Observe that $A$ being $(3,c)$-geometric implies that $A$ intersects $\ll N^c$ many $3$-chains nontrivially, simply as for every $x$ with $3\nmid x$, 
  \[
  |(\mathcal{C}(x)\cap A)\tri (3\cdot (\mathcal{C}(x)\cap A))|\geq 2. 
  \]
  By pigeonhole principle, there is at least one $x$ with $3\nmid x$ such that
  \[
  |\mathcal{C}(x)\cap A|\gg N^{1-c}. 
  \]
 Since different elements in $\mathcal{C}(x)$ lies in different $I_k$'s in $\cp_{3,0}$, we conclude that $A$ is $(3,1-c)$-lacunary. 
\end{proof}

The main purpose of the section is to prove the following proposition.

\begin{proposition}[Large sum-free subsets in lacunary sets]\label{prop: main lemma section 2}
   Let $c>0$, and let $A$ be a set of $N$ positive integers. If $A$ is $(3,c)$-lacunary then
   \[
   \M_{(2,1)}(A)-\frac{N}{3}\gg_c N^{\frac{c}{4}}. 
   \]
\end{proposition}

We will in fact prove a stronger form in view of the analytic sum-free conjecture 
(Conjecture~\ref{conj:main2}). See Proposition \ref{prop: main lemma section 2, second form}.

Heuristically when the set $A$ is $(3,c)$-lacunary, in some sense the distribution of $A$ is not far away from  a union of long geometric progressions, and we expect that approximately there is a square root cancellation for $\|\sum_{m\in A}e(mx)\|_{L^1(\RR/\ZZ)}$. To make this observation rigorous, we use the Littlewood--Paley theorem.
\begin{theorem}[Littlewood--Paley]
\label{L-P-theorem}
Let $g(x)$ be the trigonometric series
\begin{equation*}
    g(x)=\sum_{n=1}^\infty a_n e(nx).
\end{equation*}
For the sequence $\{a_n\}$, we consider the following auxiliary truncated function $\De_k$ defined as
\begin{equation*}
    \De_k(x)=\sum_{n=n_{k-1}+1}^{n_k} a_ne(nx),
\end{equation*}
where $n_0=0$, $n_1=1$, $n_{k+1}/n_k\geq\al>1$. Then for any $1<p<\infty$.
\begin{equation*}
    \Bigg\|\Big(\sum_{k=1}^\infty|\De_k|^2\Big)^{1/2}\Bigg\|_{L^p(\RR/\ZZ)}\leq C_{p,\al}\|g\|_{L^p(\RR/\ZZ)}.
\end{equation*}
\end{theorem}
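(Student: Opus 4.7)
The Littlewood--Paley theorem is a classical result, so my plan is to follow the standard proof strategy via Khintchine's inequality combined with Calder\'on--Zygmund multiplier theory, rather than attempt any genuinely new route. The core idea is to reduce a square-function estimate to a family of uniform multiplier estimates indexed by random signs.

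First, I would randomize the blocks. For a sign sequence $\eps = (\eps_k) \in \{\pm 1\}^\NN$, define the operator $T_\eps g = \sum_k \eps_k \De_k g$ and let $\eps$ be distributed as independent Rademacher variables. Applying Khintchine's inequality pointwise in $x$ and then integrating in $x$, one sees that $\|(\sum_k |\De_k|^2)^{1/2}\|_{L^p(\RR/\ZZ)}$ is equivalent, up to constants depending only on $p$, to $(\mathbb{E}_\eps \|T_\eps g\|_{L^p(\RR/\ZZ)}^p)^{1/p}$. Thus it suffices to bound $\|T_\eps g\|_{L^p}$ by $C_{p,\al}\|g\|_{L^p}$ uniformly in $\eps$.

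Second, I would verify that each multiplier $T_\eps$ is bounded on $L^p$ with a constant independent of $\eps$. The symbol $m_\eps$ is constant equal to $\eps_k$ on the frequency block $(n_{k-1}, n_k]$ (and its reflection). Because the blocks are lacunary with ratio at least $\al > 1$, only $O_\al(1)$ blocks intersect any dyadic annulus $|n| \asymp 2^j$, so $m_\eps$ satisfies the H\"ormander--Mihlin condition with constants depending only on $\al$. I would then invoke H\"ormander--Mihlin (transferring between $\RR$ and $\RR/\ZZ$ via de Leeuw, or working directly on the torus by verifying the associated Calder\'on--Zygmund kernel estimates) to conclude $\|T_\eps g\|_{L^p} \ll_{p,\al} \|g\|_{L^p}$. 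Combining this with the first step via Fubini yields the theorem.

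The main obstacle is that the proof is not genuinely self-contained: one has to import either H\"ormander--Mihlin or the corresponding kernel estimates from singular integral theory, both of which rest on Calder\'on--Zygmund decomposition and weak-type $(1,1)$ bounds. An alternative, more classical route goes through Zygmund's approach of expressing $\De_k$ in terms of conjugate functions and iterating $L^p$ bounds for the Hilbert transform on the circle, which is equally technical but avoids the full multiplier machinery. Either way, the key combinatorial input that the proof actually uses about the partition is exactly the lacunarity hypothesis $n_{k+1}/n_k \geq \al > 1$, which is what lets the H\"ormander-type condition pass.
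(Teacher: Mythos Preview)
The paper does not actually prove this theorem: immediately after the statement it simply cites \cite[Chapter~XV, Theorem~4.7]{Zygmund} for the proof and \cite{Bourgain-LP} for the explicit constant when $\al=2$. So there is nothing to compare your argument against line by line.

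Your sketch via Khintchine randomization plus a uniform H\"ormander--Mihlin bound for the sign-multiplier $T_\eps$ is one of the standard modern proofs and is correct as outlined; the lacunarity hypothesis is exactly what controls the variation of $m_\eps$ across dyadic annuli. The alternative you mention---Zygmund's route through conjugate functions and iterated bounds for the Hilbert transform---is in fact precisely the argument contained in the reference the paper cites. Either approach is acceptable here, and since the paper treats the result as a black box, your level of detail already exceeds what the paper provides.
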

The proof of the Littlewood--Paley theorem can be found in \cite[Chapter XV, Theorem 4.11]{Zygmund}. 
 The next lemma gives us a key estimate for lacunary sets.
\begin{lemma}\label{lem: lacunary L1}
Assume that $\{a_n\}_{n=1}^N$ is $(3,c)$-lacunary. Define $g(x)$ as 
\begin{equation*}
    g(x)=\sum_{n=1}^Ne(a_nx).
\end{equation*}
Then $\|g\|_{L^1(\RR/\ZZ)}\gg N^{c/3}$.
\end{lemma}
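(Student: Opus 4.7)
The plan is to apply the Littlewood--Paley theorem to the natural triadic block decomposition of $g$ and then extract a lower bound on $\|g\|_{L^1}$ via an interpolation argument.

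First I would set up the decomposition. For each $k$, let $\Delta_k(x)=\sum_{a_n\in[3^k,3^{k+1})}e^{ia_nx}$ and $m_k=|A\cap[3^k,3^{k+1})|$, so that $g=\sum_k\Delta_k$. By Plancherel, $\|g\|_{L^2}^2=\sum_km_k=N$; and a pointwise Cauchy--Schwarz gives the sharp identity $\|Sg\|_{L^\infty}=(\sum_km_k^2)^{1/2}$ (achieved at $x=0$, where every $\Delta_k$ is real and equals $m_k$), for the square function $Sg:=(\sum_k|\Delta_k|^2)^{1/2}$.

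Next I would feed these into an $L^1$--$L^3$ interpolation. By the Hölder inequality $\int|g|^2 \le (\int |g|)^{1/2}(\int|g|^3)^{1/2}$, one obtains $\|g\|_{L^1} \ge \|g\|_{L^2}^4/\|g\|_{L^3}^3 = N^2/\|g\|_{L^3}^3$. By the Littlewood--Paley theorem (Theorem~\ref{L-P-theorem}) applied with $p=3$ and $\alpha=3$, together with the standard duality to pass to the lower bound, $\|g\|_{L^3}\asymp\|Sg\|_{L^3}$. Finally, a clean Hölder estimate on the square function gives
\[
\|Sg\|_{L^3}^3 \le \|Sg\|_{L^\infty}\cdot\|Sg\|_{L^2}^2 = N\Bigl(\sum_km_k^2\Bigr)^{1/2}.
\]
Combining these three steps yields the key inequality $\|g\|_{L^1}\gtrsim N/(\sum_km_k^2)^{1/2}$.

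To finish, I would use the lacunarity via a threshold dichotomy at $T:=N^{1-2c/3}$. In the balanced case $\max_km_k\le T$, we have $\sum_km_k^2\le T\sum_km_k\le N^{2-2c/3}$, and the inequality above delivers $\|g\|_{L^1}\gtrsim N\cdot N^{-(1-c/3)}=N^{c/3}$ as desired. In the unbalanced case there is a block $B_{k^*}$ carrying more than $T$ elements; one handles this by replacing $A$ with $A\setminus B_{k^*}$ (which still has $\gg N^c$ nonempty triadic blocks, hence remains $(3,c)$-lacunary up to constants) and iterating. The transfer from $A\setminus B_{k^*}$ back to $A$ uses the pointwise domination $Sg\ge Sg'$ and hence $\|g\|_{L^p}\gtrsim\|g'\|_{L^p}$ for $1<p<\infty$, chained through the Lyapunov inequality $\|g\|_{L^{3/2}}\le\|g\|_{L^1}^{1/3}\|g\|_{L^2}^{2/3}$ to land back in $L^1$.

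The main obstacle is precisely this last transfer: Littlewood--Paley is unavailable at $p=1$, so one cannot directly import an $L^1$ bound from the smaller set $A\setminus B_{k^*}$. The cleanest way around this is to carry out the argument with $L^{3/2}$ playing the role of the intermediate exponent, using that the frequency supports of $\Delta_{k^*}$ and $g-\Delta_{k^*}$ are separated on the lacunary scale, and appealing to Lyapunov interpolation between $\|g\|_{L^{3/2}}$ and $\|g\|_{L^2}$ to recover the $L^1$ bound for $g$ itself.
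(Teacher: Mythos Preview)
Your balanced case is fine and yields the clean inequality $\|g\|_{L^1}\gtrsim N/(\sum_k m_k^2)^{1/2}$. The gap is in the unbalanced case: the transfer you sketch loses too much to recover $N^{c/3}$.

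Take the extreme configuration with one block of size $N-N^c$ and $N^c$ singleton blocks (so $A$ is $(3,c)$-lacunary). After deleting the heavy block you have $N''\asymp N^c$ and all $m_k''=1$, so your balanced estimate gives $\|g''\|_{L^1}\gtrsim (N'')^{1/2}\asymp N^{c/2}$; a sharper computation also shows $\|g''\|_{L^{3/2}}\lesssim (N'')^{1/2}$, so this is essentially tight. Now the transfer chain $Sg\geq Sg''\Rightarrow \|g\|_{L^{3/2}}\gtrsim\|g''\|_{L^{3/2}}$ followed by Lyapunov gives only
\[
\|g\|_{L^1}\geq \frac{\|g\|_{L^{3/2}}^3}{\|g\|_{L^2}^2}\gtrsim \frac{N^{3c/2}}{N}=N^{3c/2-1},
\]
which is worse than $N^{c/3}$ for every $c<6/7$, and in particular gives nothing for small $c$. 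Running the same calculation with a general intermediate exponent $p\in(1,2)$ in place of $3/2$ does not help: the Littlewood--Paley constant blows up as $p\downarrow 1$, while for fixed $p$ the factor $\|g\|_{L^2}^{-2}$ (or its analogue) kills the gain whenever $N''\ll N$. The underlying issue is that your key inequality bounds $\|g\|_{L^1}$ via an \emph{upper} bound on the square function, which is insensitive to the number of nonempty blocks.

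The paper sidesteps the dichotomy entirely by going the other way on the square function. From the pointwise Cauchy--Schwarz $(\sum_{k\in E}|\Delta_k|^2)^{1/2}\geq |E|^{-1/2}\sum_{k\in E}|\Delta_k|$ and the elementary bound $\|\Delta_k\|_{L^1}\geq \|\Delta_k\|_{L^2}^2/\|\Delta_k\|_{L^\infty}=m_k/m_k=1$ for every nonempty block, one gets $\|Sg\|_{L^p}\geq |E|^{1/2}\gg N^{c/2}$ uniformly in $p>1$. Then $\|g\|_{L^1}\geq \|g\|_{L^p}^p/\|g\|_{L^\infty}^{p-1}\gtrsim N^{cp/2}/N^{p-1}$, and choosing $p=1+c/6$ delivers $N^{c/3}$. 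The point is that the lacunarity is encoded in the \emph{lower} bound $\|Sg\|_{L^p}\geq |E|^{1/2}$, not in any upper bound on $\sum_k m_k^2$; replacing your $L^3$ upper bound by this $L^p$ lower bound removes the need for any case split or iteration.
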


\begin{proof}
As $\{a_n\}_{n=1}^N$ is $(3,c)$-lacunary, by translating the set if necessary it can be assumed that there is a collection of $\gg N^c$ intervals of the shape $[3^k, 3^{k+1})$ that cover $A$ and that each contains at least one element from $A$.

Let $\phi_{k}$ be the indicator function of the interval $[3^k,3^{k+1})$. We denote by $\De_k(g)$ the Fourier truncation
\begin{equation*}
    \De_k(g)(x)=\sum_{n=1}^N\phi_{k}(a_n)e(a_nx).
\end{equation*}
By the Littlewood--Paley theorem (Theorem  \ref{L-P-theorem}), we have for any $1<p<\infty$,
\begin{equation}
\label{L-P}
    \|g\|_{L^p(\RR/\ZZ)}\geq C_p\Bigg\|\Big(\sum_{k=0}^\infty|\De_kg(x)|^2\Big)^{1/2}\Bigg\|_{L^p(\RR/\ZZ)}=C_p\Bigg\|\Big(\sum_{k\in E}|\De_kg(x)|^2\Big)^{1/2}\Bigg\|_{L^p(\RR/\ZZ)}.
\end{equation}
Here the set $E$ contains all the positive integers $k$ satisfying $[3^k,3^{k+1})\in\cp$, so $|E|=|\cp|\gg N^c$. We bound the right hand side of equation \eqref{L-P} using H\"older's inequality so that
\begin{equation*}
    \Bigg\|\Big(\sum_{k\in E}|\De_kg(x)|^2\Big)^{1/2}\Bigg\|_{L^p(\RR/\ZZ)}\geq |E|^{-1/2}\Bigg\|\Big(\sum_{k\in E}|\De_kg(x)|\Big)\Bigg\|_{L^p(\RR/\ZZ)},
\end{equation*}
which clearly implies
\begin{equation*}
    \Bigg\|\Big(\sum_{k\in E}|\De_kg(x)|^2\Big)^{1/2}\Bigg\|_{L^p(\RR/\ZZ)}\geq |E|^{-1/2}\sum_{k\in E}\bigg\|\De_kg(x)\bigg\|_{L^1(\RR/\ZZ)}.
\end{equation*}
Using H\"older's inequality again, we get $\|\De_kg\|_2^2\leq\|\De_kg\|_1\|\De_kg\|_\infty$, and this implies $\|\De_kg\|_{L^1}\geq1$ uniformly in $k$. Therefore,
\begin{equation}
\label{lower-bound-Lp}
    \Bigg\|\Big(\sum_{k\in E}|\De_kg(x)|^2\Big)^{1/2}\Bigg\|_{L^p(\RR/\ZZ)}\geq |E|^{1/2}\gg N^{c/2}.
\end{equation}
Since $\|g\|_p^p\leq\|g\|_1\|g\|_\infty^{p-1}$, we can bound $\|g\|_{L^p}^p$ easily by
\begin{equation}
\label{lower-bound-L1}
    \|g\|_{L^1(\RR/\ZZ)}\geq N^{1-p}\|g\|_{L^p(\RR/\ZZ)}^p
\end{equation}
Finally, we combine estimates \eqref{L-P}, \eqref{lower-bound-Lp} and \eqref{lower-bound-L1} to finish the proof of this lemma, by choosing $p=1+c/6$.
\end{proof}

\vspace{3mm}

Now we are going to prove the following stronger form of Proposition \ref{prop: main lemma section 2}, using the same argument in the proof of \cite[Proposition 1.4]{Bourgain97} together with Lemma~\ref{lem: lacunary L1}. As mentioned in the introduction, the following proposition would imply Proposition~\ref{prop: main lemma section 2}. 

\begin{proposition}\label{prop: main lemma section 2, second form}
Let $c>0$, and let $A$ be a set of $N$ positive integers. If $A$ is $(3,c)$-lacunary then
     there exists a maximal sum-free set $\Omega\subseteq\RR/\ZZ$ that
\[
\max_{x\in\RR/\ZZ}\sum_{n\in A}\Big(\mathbbm{1}_{\Omega}-\frac{1}{3}\Big)(nx)\gg_c N^{\frac{c}{4}}.
\]
\end{proposition}

\begin{proof}
Let $\Omega=(1/3,2/3)\subseteq \RR/\ZZ$, and it is easy to check that $\Omega$ is sum-free in $\RR/\ZZ$. Define $\mathbbm{1}_\Omega$ as the characteristic function of $\Omega$, and let $f=\mathbbm{1}_\Omega-1/3$ be the balanced function of $\mathbbm{1}_\Omega$. By orthogonality of characters we have
\[
\widehat{f}(n)=\begin{cases}
0&\quad\text{if }n=0,\\
\widehat{\mathbbm{1}_\Omega}(n)&\text{otherwise.}
\end{cases}
\]
When $n>0$,
\begin{align*}
    \widehat{f}(n)&=\int_{\RR/\ZZ} \mathbbm{1}_\Omega(x)e(-nx)\d x\\
    &=\frac{1}{2\pi i n}\Big(-e\Big(-\frac{2n}{3}\Big)+e\Big(-\frac{n}{3}\Big)\Big)\\
    &=\frac{1}{\pi n}e\Big(\frac{n}{2}\Big)\sin\Big(\frac{n\pi}{3}\Big). 
\end{align*}
Therefore we obtain
\begin{align}
f(x)&=\sum_{n\neq0}\widehat{f}(n)e(nx)=\sum_{n\neq0}\frac{1}{\pi n}e\Big(\frac{n}{2}\Big)\sin\Big(\frac{n\pi}{3}\Big)e(nx)\nonumber \\
&=-\frac{\sqrt3}{\pi}\sum_{n\geq1}\frac{\chi(n)}{n}\cos(2\pi nx), \label{eq: f}
\end{align}
where $\chi(n)$ is a nontrivial multiplicative character mod $3$, that is
\[
\chi(n)=\begin{cases}
1\quad&\text{when } n\equiv1\pmod3,\\
-1&\text{when } n\equiv2\pmod3,\\
0&\text{otherwise.}
\end{cases}
\]

Define
\[
F(x)=\sum_{m\in A}f(mx). 
\]
Since $f$ is a balanced function, we have $\int_{\RR/\ZZ}F=0$, and this implies that 
\begin{equation}\label{eq: L1 - max}
\max_{x\in\RR/\ZZ} F(x)\geq \frac{1}{2}\big\|F\big\|_{L^1(\RR/\ZZ)}. 
\end{equation}
Let $P\asymp N^2$ be a prime, and let $\MM$ be the collection of square-free integers generated by primes smaller than $P$. Let $\mu$ be the M\"obius function, so by equation (\ref{eq: f}),
\begin{align*}
    \sum_{k\in\MM}\frac{\mu(k)\chi(k)}{k}\sum_{m\in A} f(mkx)&=-\frac{\sqrt3}{\pi}\sum_{m\in A,\,n\geq1}\frac{\chi(n)}{n}\cos(2\pi mnx)\sum_{k\in\MM,\, k\mid n}\mu(k)\\
    &=-\frac{\sqrt3}{\pi}\sum_{\substack{m\in A\\ n\geq1, n\in\mathcal{N}}}\frac{\chi(n)}{n}\cos(2\pi mnx),
\end{align*}
where $\N$ is the set of integers $n$ such that for every $p<P$, $\gcd(n,p)=1$. 

Therefore, by Minkowski's inequality we have
\begin{align*}
    \Bigg\Vert \sum_{k\in\MM}\frac{\mu(k)\chi(k)}{k}\sum_{m\in A} f(mkx) \Bigg\Vert_{L^1(\RR/\ZZ)}&\gg \Bigg\Vert \sum_{m\in A}\cos(2\pi mx)\Bigg\Vert_{L^1(\RR/\ZZ)}\\
    &\quad - \Bigg\Vert \sum_{\substack{m\in A\\ n>1, n\in\mathcal{N}}}\frac{\chi(n)}{n}\cos(2\pi mnx) \Bigg\Vert_{L^1(\RR/\ZZ)}.
\end{align*}
Via the Cauchy--Schwarz inequality and Plancherel, the second term is bounded by
\begin{equation*}
    \Bigg\Vert \sum_{\substack{m\in A\\ n>1, n\in\mathcal{N}}}\frac{\chi(n)}{n}\cos(2\pi mnx) \Bigg\Vert_{L^1(\RR/\ZZ)}\leq C|A| P^{-1/2}.
\end{equation*}
Note that $P\asymp N^2$. By Mertens' estimate, we have
\begin{align*}
   \Bigg\Vert \sum_{m\in A}\cos(2\pi mx)\Bigg\Vert_{L^1(\RR/\ZZ)} &\ll \sum_{k\in\MM} \frac{|\mu(k)|}{k}\big\Vert F(x)\big\Vert_{L^1(\RR/\ZZ)} + O(1).\\
   &\ll \prod_{p<P}\Big(1+\frac{1}{p}\Big)\big\Vert F(x)\big\Vert_{L^1(\RR/\ZZ)}\asymp \log N\big\Vert F(x)\big\Vert_{L^1(\RR/\ZZ)}.
\end{align*}
Since $A$ is $(3,c)$-lacunary, we invoke Lemma~\ref{lem: lacunary L1} to get
\begin{equation}
\nonumber
    \Bigg\Vert \sum_{m\in A}\cos(2\pi mx)\Bigg\Vert_{L^1(\RR/\ZZ)}\gg N^{c/3},
\end{equation}
which implies
\[
\big\Vert F(x)\big\Vert_{L^1(\RR/\ZZ)}\gg \frac{N^{c/3}}{\log N}.
\]
Finally, we use estimate (\ref{eq: L1 - max}) to conclude $\max_{x\in\RR/\ZZ}F(x)\gg N^{c/4}$. 
\end{proof}

\section{A density estimate}

In this section, we prove a generalization of the McGehee--Pigno--Smith theorem \cite{MPS}, based on the ideas given by Bourgain~\cite{Bourgain97}.
Recall that the weak Littlewood problem~\cite{Littlewood} is to ask to estimate 
\[
I(N):=\min_{A\subseteq\mathbb{Z}, |A|=N}\int_{\RR/\ZZ}\Big|\sum_{n\in A}e(nx)\Big|\d x.
\]
The conjecture, $I(N)\gg\log N$, is resolved by McGehee, Pigno, and Smith \cite{MPS}, and independently by Konyagin \cite{K81}. 

Let $\cn_1$ be the set of natural numbers that does not contains 1, and only contains prime factors at least $Q$, where $Q\asymp(\log N)^{100}$ is a prime. We will use the following lemma from \cite[Section~5]{Bourgain97}.

\begin{lemma}
\label{bourgain-lemma}
Let $A$ be a finite subset of $\ZZ^+$ with $|A|=N$. For all $R\geq1$, we define
\begin{equation*}
    A_R=\{m\in A:m<R\}.
\end{equation*}
Also, we use ${\rm Proj}_R\sum a_ke(kx)$ to denote the truncated sum $\sum_{|k|\leq R}a_ke(kx)$. Assume $|a_n|\leq 1$ and $Q>(\log N)^{20}$. Then there is an absolute big constant $C$, such that
\begin{equation*}
    \Bigg\|{\rm Proj}_R\sum_{\substack{n\in\cn_1, m\in A}}\frac{a_n}{n}e(mnx)\Bigg\|_2< CP^{-1/15}|A_R|^{1/2}.
\end{equation*}
\end{lemma}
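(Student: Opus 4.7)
The plan is to apply Parseval's identity to convert the $L^2$ norm into a diophantine counting problem over quadruples $(m_1,n_1,m_2,n_2)$ satisfying the multiplicative equation $m_1 n_1 = m_2 n_2 \leq R$ with $m_i \in A_R$ and $n_i \in \cn_1$, and then exploit the fact that every nontrivial element of $\cn_1$ has all prime factors at least $Q\asymp(\log N)^{100}$ to extract the claimed power saving. Write $S(x) = \sum_{m\in A,\, n\in\cn_1}\frac{a_n}{n}e^{imnx}$.

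First, by Parseval,
\[
\Big\|{\rm Proj}_R S\Big\|_2^2 = \sum_{1\leq k\leq R}\Big|\sum_{\substack{mn=k\\ m\in A,\, n\in\cn_1}}\frac{a_n}{n}\Big|^2,
\]
and since the constraint $mn=k\leq R$ together with $n\geq 1$ forces $m\in A_R$, expanding the square and using $|a_n|\leq 1$ gives
\[
\|{\rm Proj}_R S\|_2^2 \;\leq\; \sum_{\substack{m_1,m_2\in A_R\\ n_1,n_2\in\cn_1\\ m_1 n_1 = m_2 n_2\leq R}}\frac{1}{n_1 n_2}.
\]
I would then parametrize solutions of $m_1 n_1 = m_2 n_2$ in the standard way: set $d=\gcd(m_1,m_2)$, write $m_i=de_i$ with $\gcd(e_1,e_2)=1$, and put $s=n_1/e_2=n_2/e_1$, so that $n_1=e_2 s$ and $n_2=e_1 s$. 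The requirement $n_1,n_2\in\cn_1$ forces every prime factor of $e_1$, $e_2$, and $s$ to be at least $Q$; in particular, whenever $e_i>1$ one has $e_i\geq Q$, which is the source of the gain.

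Next, split into cases. The diagonal $e_1=e_2=1$ contributes at most $|A_R|\sum_{s\in\cn_1,\,s\geq Q}s^{-2}\ll |A_R|/Q$, which is well below the target $P^{-2/15}|A_R|$. In the off-diagonal cases at least one of $e_1,e_2$ is $\geq Q$; the factor $1/(n_1 n_2)$ then contains at least one term of size $\leq 1/Q$, and one is reduced to controlling sums like $\sum_{(m_1,m_2)\in A_R\times A_R,\; m_1|m_2,\; m_2/m_1\in\cn_1}(m_1/m_2)$ using only the sieve estimate $\#\{n\in\cn_1:n\leq X\}\ll X/\log Q$ and the cardinality of $A_R$. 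To upgrade the logarithmic Mertens-type savings into a genuine power saving, I would apply H\"older's inequality at an exponent slightly above $1$, exploiting $\sum_{n\in\cn_1}n^{-1-\delta}\ll Q^{-\delta}/\log Q$ to trade the divergence of $\sum_{n\in\cn_1}n^{-1}$ for a factor $Q^{-c\delta}$, while bounding the $\ell^\infty$ part of the inner sum by the trivial count on $A_R$. Balancing the H\"older exponent under the hypothesis $P>(\log N)^{20}$ produces the specific numerical exponent $1/15$.

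The main obstacle will be this off-diagonal estimate: the crude divisor-sum bound gives only $|A_R|\log R/(Q\log Q)$, and the trivial pairing bound gives only $|A_R|^2/Q^2$, both of which are too weak once $|A_R|$ is comparable to $N$ and $R$ may be large. Overcoming this requires a careful interplay between the sieve density of $\cn_1$, the H\"older interpolation described above, and the constraint that both $de_1$ and $de_2$ lie in $A_R$ (which for most $d$ drastically cuts down the available $e_i$), which is precisely the delicate step carried out in \cite{Bourgain97} and which dictates the exponent $1/15$ in the final bound.
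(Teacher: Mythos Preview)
The paper does not prove this lemma; it is quoted from \cite[Section~5]{Bourgain97} and used as a black box. So there is no in-paper argument to compare against beyond the citation itself.

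Your Parseval reduction and gcd parametrization are a correct and natural opening, and the diagonal estimate is fine (reading $\cn_1$ as excluding $1$, which is consistent with how the paper separates the $n=1$ main term from the $\cn_1$-tail throughout Sections~3--4). But the proposal is not a proof: for the off-diagonal contribution you only name the obstacle, gesture at a H\"older interpolation, and then explicitly defer to \cite{Bourgain97} for ``the delicate step\ldots which dictates the exponent $1/15$.'' That step is the entire content of the lemma. Concretely, your own crude bound $|A_R|\log R/(Q\log Q)$ carries an uncontrolled $\log R$ (nothing in the hypotheses ties $R$ to $N$), and you do not show how either the H\"older trick or the constraint $de_1,de_2\in A_R$ actually eliminates it; nor do you verify that this route, rather than some direct divisor-type control on the Fourier coefficients, is what Bourgain in fact does. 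In short, both you and the paper end up pointing to \cite{Bourgain97} for the substance --- your write-up adds a reasonable preamble but does not close the gap.
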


Now we are able to prove our technical lemma. The proof basically follows the  arguments used in \cite{JW20} (which is also based on the ideas in \cite{MPS} and \cite{Bourgain97}), but with more explanations and a stronger conclusion. 

\begin{lemma}
\label{technical-lemma}
Let $B=\{m_1,\ldots,m_M\}$ be a finite subset of $\ZN^{>0}$ and let $Q>(\log M)^{100}$. Assume that $w:\ZN^{>0}\to\ZC$ is a weight and $|a_n|=O(1)$. Then there exists a function $\Phi(x)$ with $\|\Phi\|_\infty<10$ such that 
\begin{equation}
\label{technical-estimate-1}
    \bigg|\bigg\langle\sum_{j=1}^Me(m_jx)w(m_j),\Phi(x)\bigg\rangle\bigg|\gg\sum_{j=1}^M\frac{|w(m_j)|}{j};
\end{equation}
while for any $\be\in \ZZ$,
\begin{equation}
\label{technical-estimate-2}
    \bigg|\bigg\langle\sum_{\substack{n\in\cn_1, m\in B}}\frac{a_n}{n} e(\beta mnx), \Phi(x)\bigg\rangle\bigg|\leq C(\log M)^{-2}.
\end{equation}
Here $c,C$ are two absolute constants.
\end{lemma}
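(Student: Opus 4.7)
The plan is to follow the McGehee--Pigno--Smith Riesz-product argument~\cite{MPS}, in the form refined by Bourgain~\cite{Bourgain97}, to produce a test function $\Phi$ that is bounded in $L^\infty$ and simultaneously nearly orthogonal to the arithmetic frequencies $\{\beta m n:m\in B,\,n\in\cn_1\}$. After reindexing so that $m_1<m_2<\cdots<m_M$, I would partition the indices dyadically as $I_k=\{j:2^{k-1}<j\leq 2^k\}$ for $0\leq k\leq K:=\lceil\log_2 M\rceil$, and on each block construct a Riesz-product polynomial $\Phi_k$ with $\|\Phi_k\|_\infty\ll 1$, $\mathrm{spec}(\Phi_k)\subset[-R_k,R_k]$ for some $R_k$ polynomially bounded in $m_{2^k}$, and $\widehat{\Phi_k}(-m_j)\asymp 2^{-k}\,\mathrm{sgn}\,\overline{w(m_j)}$ for every $j\in I_k$. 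Setting $\Phi=c_0\sum_{k=0}^{K}\Phi_k$ for a sufficiently small absolute constant $c_0>0$, the standard MPS--Konyagin bound on sums of Riesz products gives $\|\Phi\|_\infty<10$.

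Estimate~\eqref{technical-estimate-1} then follows at once from the block-diagonal structure of $\Phi$ on the frequencies $\{m_j\}$: one computes
\[\Bigl\langle\sum_j w(m_j)\,e^{im_jx},\,\Phi\Bigr\rangle=c_0\sum_k\sum_{j\in I_k}\widehat{\Phi_k}(-m_j)\,w(m_j)\gg\sum_j\frac{|w(m_j)|}{j},\]
using $j\asymp 2^k$ on the block $I_k$. For~\eqref{technical-estimate-2}, fix $\beta\in\ZZ\setminus\{0\}$ and write $S(x)=\sum_{n\in\cn_1,m\in B}\tfrac{a_n}{n}e^{i\beta mnx}$. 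Since $\mathrm{spec}(\Phi_k)\subset[-R_k,R_k]$, the substitution $m\mapsto\beta m$ combined with Plancherel and Cauchy--Schwarz gives
\[\bigl|\langle S,\Phi_k\rangle\bigr|\leq\bigl\|\mathrm{Proj}_{R_k/|\beta|}S'\bigr\|_2\,\|\Phi_k\|_2\leq CP^{-1/15}\bigl|B_{R_k/|\beta|}\bigr|^{1/2},\]
where $S'(x)=\sum_{n,m}\tfrac{a_n}{n}e^{imnx}$ and Lemma~\ref{bourgain-lemma} bounds the first factor; by the ordering of $B$, $\bigl|B_{R_k/|\beta|}\bigr|\ll 2^k$. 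Summing over $k$ carefully, using that only the $k$ with $R_k\gg|\beta|Q$ contribute (since $n\geq Q$ for nontrivial $n\in\cn_1$), and that $P>(\log N)^{100}\geq(\log M)^{100}$, produces $|\langle S,\Phi\rangle|\ll(\log M)^{-2}$.

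The hard part will be the bookkeeping in the final summation: a naive block-by-block bound of $P^{-1/15}2^{k/2}$ summed across $K\asymp\log M$ dyadic blocks loses a prohibitive factor of $M^{1/2}$. To recover the sharp $(\log M)^{-2}$ decay one must exploit the $Q$-smoothness of $\cn_1$---namely $n\geq Q\asymp(\log N)^{100}$ for nontrivial $n\in\cn_1$---so that the pairs $(m,n)$ contributing to $\mathrm{Proj}_{R_k/|\beta|}S'$ satisfy $m\leq R_k/(|\beta|Q)$; this shrinks the effective $\bigl|B_{R_k/|\beta|}\bigr|$ by a factor of $Q$ and produces an extra gain of $Q^{-1/2}$ per block. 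This delicate refinement is the technical heart of Bourgain's argument in~\cite{Bourgain97}, which I would closely follow, together with a Konyagin-type rescaling by $c_0$ to ensure that the absolute constant in $\|\Phi\|_\infty<10$ comes out sharp.
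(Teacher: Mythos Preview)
Your proposal has a genuine gap at the very first step: you set $\Phi=c_0\sum_{k=0}^{K}\Phi_k$ with each $\|\Phi_k\|_\infty\ll 1$ and then assert that ``the standard MPS--Konyagin bound on sums of Riesz products gives $\|\Phi\|_\infty<10$''. No such bound exists. A direct sum of $K\asymp\log M$ functions each of $L^\infty$ norm $O(1)$ has $L^\infty$ norm $O(\log M)$ in general, so either $c_0\ll(\log M)^{-1}$, which destroys \eqref{technical-estimate-1}, or $\|\Phi\|_\infty$ is unbounded. The whole point of the MPS construction is that $\Phi$ is \emph{not} a sum but is built iteratively as $\Phi_k=Q_k\Phi_{k-1}+P_k$, where the ``compensators'' $Q_k$ are analytic functions of the form $\exp\bigl(-(|\widetilde P_k|-i\mathcal H[|\widetilde P_k|])\bigr)$ convolved with a Fej\'er kernel. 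The elementary inequality $\tfrac{a}{10}+e^{-a}\leq 1$ then gives $\|\tfrac1{10}|P_k|+|Q_k|\|_\infty\leq 1$ and hence $\|\Phi_k\|_\infty\leq 10$ by induction. This multiplicative damping mechanism is the heart of the argument, and it is absent from your outline.

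Your treatment of \eqref{technical-estimate-2} is also off. In the paper the per-block contribution is controlled by Cauchy--Schwarz as $\|P_k\|_2\cdot\|\mathrm{Proj}_{b_k}H\|_2$, and the crucial cancellation is that $\|P_k\|_2\leq|B_k|^{-1/2}$ while Lemma~\ref{bourgain-lemma} gives $\|\mathrm{Proj}_{b_k}H\|_2\ll P^{-1/15}|A_{b_k}|^{1/2}$ with $|A_{b_k}|\asymp|B_k|$; the two powers of $|B_k|^{1/2}$ cancel exactly, leaving $O(P^{-1/15})$ per block and $O(P^{-1/15}(\log M)^2)$ after summing the cross terms. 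Your Riesz-product $\Phi_k$ has spectral radius $R_k$ of order $\sum_{j\in I_k}m_j$, not $m_{2^k}$, so the assertion $|B_{R_k/|\beta|}|\ll 2^k$ is unjustified; and the proposed rescue via ``$n\geq Q$ for nontrivial $n\in\cn_1$'' both forgets that $n=1\in\cn_1$ and would in any case only save a fixed power of $Q$, nowhere near enough to absorb an $M^{1/2}$ loss. The correct mechanism is the $L^2$ smallness of the test pieces $P_k$, which is built into the MPS construction and is not available for genuine Riesz products.
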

\begin{proof}
Let $k_0$ be the largest natural number that $10^{6k_0}<M$. We group $B$ into disjoint subsets $\{B_k\}_{k=0}^{k_0}$ such that for $0\leq k\leq k_0-1$, $|B_k|=10^{6k}$. Here $B_0=\{m_1\}$, $B_1=\{m_2,\ldots,m_{10^6+1}\},\cdots,$ and $B_{k_0}=A\setminus(\bigcup_{k\leq k_0-1}B_k)$. From the construction we know $|B_{k_0}|\asymp 10^{6k_0}$. Let $\tau:\ZN^{>0}\to\ZS^1$ be the argument function that $\tau(m)w(m)\geq0$. For each $B_k$, we define 
\begin{equation*}
    \wt P_{k}=\frac{1}{|B_k|}\sum_{m\in B_k} e(mx)\tau(m).
\end{equation*}
Let $I_k=[a_k,b_k]$ be the interval with $a_k=\min\{m:m\in B_k\}$, $b_k=\max\{m:m\in B_k\}$, and let $\xi_k$ be the center of $I_k$. We also define
\begin{equation*}
    P_k=\wt P_{k}\ast \left(e(\xi_k x)F_{|I_k|}\right),
\end{equation*}
where $F_C=\sum_{|m|\leq C}\frac{C-|m|}{C}e(mx)$ is the $C$-F\'ejer kernel. Consequently,
\begin{equation}
\label{support-P-k}
    {\rm supp}(\wh P_k)={\rm supp}(\wh{\wt P_k})\subset I_k,
\end{equation}
and for any $m\in B_k$
\begin{equation*}
    w(m)\wh{P_k}(m) >10^{-6k-1}|w(m)|.
\end{equation*}

This shows that the functions $P_k$ are good test functions. However, the function $\sum_k P_k(x)$ has one drawback: It is not distributed evenly on the torus. That is, the $L^\infty$-norm $\sum_k P_k(x)$ is comparably large. 

To overcome this difficulty, for each $P_k$, we construct a function $Q_k$ serving as a ``compensator". Specifically, let $\ch$ be the Hilbert transform in $L^2(\RR/\ZZ)$ that $\wh{\ch{f}}(n)=-i{\rm sgn}(n)\wh f(n)$, so that when $f$ is a real-valued function, $\ch f$ is also real-valued. We define
\begin{equation}
\label{Q-k}
    Q_k=\Big(e^{-(|\ti P_{k}|-i\ch[|\ti P_{k}|])}\Big)\ast F_{|I_k|}.
\end{equation}
Since the Fourier series of $e^{-(|\ti P_{k}|-i\ch[|\ti P_{k}|])}$ is supported in non-positive integers, 
\begin{equation}
\label{support-Q-k}
    {\rm supp}\big(\wh Q_k\big)\subset[-|I_k|,0].
\end{equation}
Using the inequality that $|e^{-z}-1|\leq |z|$ if $z\in\ZC$ and ${\rm Re}(z)\geq0$, we can easily prove
\begin{equation}
\label{l2-Q-k}
    \|1-Q_k\|_2\leq\|\ti P_k\|_2+\big\|\ch[|\ti P_{k}|]\big\|_2<2|B_k|^{-1/2}.
\end{equation}
Thus, $Q_k$ is approximately the identical function. In fact, $|Q_k|$ is relatively small when $|P_k|$ is relatively large, so $Q_k$ can help us ``mollify" the function $P_k$.

\vspace{3mm}
We will use the functions $P_k,Q_k$ to construct our test function $\Phi$. In specific, we set $\Phi_0=P_0$ and set
\begin{equation}
\label{inductive-def}
    \Phi_k=Q_{k}\Phi_{k-1}+P_k, \hspace{1cm}1\leq k\leq k_0.
\end{equation}
Define $\Phi=\Phi_{k_0}$, which has the explicit formula 
\begin{equation}
\label{explicit-def}
    \Phi=P_{k_0}+P_{k_0-1}Q_{k_0}+P_{k_0-2}Q_{k_0-1}Q_{k_0}+\cdots+P_0Q_1\cdots Q_{k_0}.
\end{equation}

We claim $\|\Phi\|_\infty<10$. To see this, we first recall the basic inequality: $\frac{a}{10}+e^{-a}\leq1$ if $a\geq0$. Then, observing $|P_0|=1$ and
\begin{equation*}
    \Big\|\frac{1}{10}|P_k|+|Q_k|\Big\|_\infty\leq\Big\|\big(\frac{1}{10}|\wt P_k|+e^{-|\wt P_k|}\big)\ast F_{|I_k|}\Big\|_\infty\leq\Big\|\frac{1}{10}|\wt P_k|+e^{-|\wt P_k|}\Big\|_\infty\leq1,
\end{equation*}
we argue inductively using \eqref{inductive-def} to conclude our claim.

\vspace{3mm}
Next, we will verify \eqref{technical-estimate-1}. We will prove that for any $m\in B_k$, 
\begin{equation}
\label{pointwise-esti}
    \big|\wh\Phi(m)-\wh{P}_k(m)\big|\leq 10^{-1}\big|\wh{P_k}(m)\big|=\frac{1}{10|B_k|}.
\end{equation}
In fact, using the support condition \eqref{support-Q-k} and the equation \eqref{explicit-def}, we have
\begin{equation*}
    \wh\Phi(m)-\wh{P}_k(m)=\wh{P}_{k_0}(m)+\wh{P_{k_0-1}}\ast\wh{Q_{k_0}}(m)+\cdots+\wh{P_k}\ast(1-(Q_{k_0}\cdots Q_{k+1})^\wedge)(m),
\end{equation*}
which, combining the support condition of $\wh{P_k}$ in \eqref{support-P-k}, equals to
\begin{equation*}
    \sum_{j=k}^{k_0-1}\wh{P_j}\ast(1-(Q_{k_0}\cdots Q_{j+1})^\wedge)(m).
\end{equation*}
We estimate the above quantity using the equality
\begin{equation}
\nonumber
    1-Q_{k+1}\cdots Q_{k_0}=(1-Q_{k+1})+Q_{k+1}(1-Q_{k+2})+\cdots+(1-Q_{k_0})Q_{k+1}\ldots Q_{k_0-1},
\end{equation}
so that
\begin{equation*}
    |\wh\Phi(m)-\wh{P}_k(m)|=\bigg|\sum_{j=k}^{k_0-1}\wh{P_j}\ast(1-(Q_{k_0}\cdots Q_{j+1})^\wedge)(m)\bigg|\leq\sum_{j=k}^{k_0-1}\|P_j\|_2\sum_{l=j}^{k_0-1}\|1-Q_l\|_2.
\end{equation*}
Since $\|P_j\|_2\leq |B_j|^{-1/2}$ and since \eqref{l2-Q-k}, the right hand side of the above inequality can be bounded as
\begin{equation*}
    \sum_{j=k}^{k_0-1}\|P_j\|_2\sum_{l=j}^{k_0-1}\|1-Q_{l+1}\|_2\leq2\sum_{j=k}^{k_0-1}10^{-3j}\sum_{l=j}^{k_0-1}10^{-3(l+1)},
\end{equation*}
which implies what we need that
\begin{equation*}
    |\wh\Phi(m)-\wh{P}_k(m)|\leq10^{-3k-2}\leq 10^{-1}|\wh{P_k}(m)|.
\end{equation*}

\vspace{3mm}

As a consequence of \eqref{pointwise-esti}, for any $m\in B_k$, 
\begin{equation*}
    {\rm Re} (w\wh{\Phi})(m)>\frac{1}{2}w(m)\wh{P}_k(m)\geq 10^{-6k-1}|w(m)|.
\end{equation*}
We use the above inequality to sum up all $m\in B$ to get
\begin{equation*}
    \bigg|\bigg\langle\sum_{j=1}^Me(m_jx)w(m_j),\Phi(x)\bigg\rangle\bigg|\geq\sum_{j=1}^M{\rm Re}(w\wh\Phi)(m_j)\gg\sum_{j=1}^M\frac{|w(m_j)|}{j},
\end{equation*}
and this gives \eqref{technical-estimate-1}.

\vspace{3mm}

Finally, we remark that the proof of \eqref{technical-estimate-2} is given in \cite{JW20} Section 3, with the help of Lemma \ref{bourgain-lemma}. At this point, we complete the proof of the lemma.
\end{proof}

As an application of Lemma \ref{technical-lemma}, we have the following corollary:
\begin{corollary}\label{cor: key}
Let $B=\{m_1,\ldots,m_M\}$ be a finite subset of $\ZN^{>0}$ and let $Q>(\log M)^{100}$. Recall that $\cn_1$ is the set of natural numbers that does not contain 1 and only contains prime factors at least $Q$. Assume $|a_n|= O(1)$. Then for any $\Gamma\subset\ZZ$ with $|\Gamma|\leq \log M$, we have
\begin{equation*}
    \Bigg\|\sum_{j=1}^Me(m_jx)w(m_j)+\sum_{\substack{n\in\cn_1, m\in B}}\Big(\sum_{\beta\in\Gamma}\frac{a_n}{n}e(\beta mnx)\Big)\Bigg\|_1\geq c\sum_{j=1}^M\frac{|w(m_j)|}{j}-o(1).
\end{equation*}
\end{corollary}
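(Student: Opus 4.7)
The plan is to deduce the corollary from Lemma \ref{technical-lemma} via $L^1$–$L^\infty$ duality. First I would feed the set $B=\{m_1,\dots,m_M\}$ and the weight $w$ into Lemma \ref{technical-lemma} to produce a single test function $\Phi$ with $\|\Phi\|_\infty<10$ that simultaneously satisfies \eqref{technical-estimate-1} and \eqref{technical-estimate-2}. Writing $F(x)$ for the trigonometric polynomial inside the $L^1$-norm on the left-hand side of the corollary, the elementary duality bound
\[
\|F\|_1\;\geq\;\|\Phi\|_\infty^{-1}\,|\langle F,\Phi\rangle|\;\gg\;|\langle F,\Phi\rangle|
\]
immediately reduces the problem to producing a lower bound on $|\langle F,\Phi\rangle|$.

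Next I would expand the pairing by linearity into two pieces. The contribution of $\sum_{j=1}^M e^{im_jx}w(m_j)$ is, by \eqref{technical-estimate-1}, of size $\gg\sum_j |w(m_j)|/j$, which is exactly the desired main term. For the second piece, I would pull the finite sum over $\beta\in\Gamma$ outside the inner product, apply the triangle inequality in $\beta$, and use \eqref{technical-estimate-2} for each fixed $\beta$. This yields an error of at most
\[
|\Gamma|\cdot C(\log M)^{-2}\;\leq\; C\,\log N\,(\log M)^{-2}.
\]
A reverse triangle inequality then gives $|\langle F,\Phi\rangle|\geq c\sum_j |w(m_j)|/j - C\log N\,(\log M)^{-2}$, and dividing by $\|\Phi\|_\infty<10$ finishes the claim up to absolute constants.

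In the intended application $\log M\asymp \log N$, so the error is $O(1/\log N)=o(1)$, which is why the hypothesis $|\Gamma|\leq\log N$ appears: it is precisely calibrated so that the $(\log M)^{-2}$ decay from \eqref{technical-estimate-2} absorbs the factor $|\Gamma|$ with room to spare. Since Lemma \ref{technical-lemma} has already carried out the hard analytic work (the Riesz-product construction of $P_k$ together with the Hilbert-transform compensators $Q_k$), there is no genuine obstacle here: the corollary amounts to packaging the two estimates from that lemma through a single duality pairing, and the only thing left to verify is the routine summation over $\Gamma$.
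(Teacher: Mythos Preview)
Your proposal is correct and follows essentially the same approach as the paper's proof: apply Lemma~\ref{technical-lemma} to obtain $\Phi$, use the duality bound $\|F\|_1\|\Phi\|_\infty\geq|\langle F,\Phi\rangle|$, split the pairing into the main term controlled by \eqref{technical-estimate-1} and the error term handled by summing \eqref{technical-estimate-2} over $\beta\in\Gamma$. Your explicit remark that the $o(1)$ relies on $\log M\asymp\log N$ in the intended application is a fair observation the paper leaves implicit.
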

\begin{proof} We apply Lemma \ref{technical-lemma} to obtain a function $\Phi(x)$ satisfying \eqref{technical-estimate-1} and \eqref{technical-estimate-2}. Then 
\begin{eqnarray}
\nonumber
    &&\Bigg\|\sum_{j=1}^Me(m_jx)w(m_j)+\sum_{\substack{n\in\cn_1, m\in B}}\Big(\sum_{\beta\in\Gamma}\frac{a_n}{n}e(\beta mnx)\Big)\Bigg\|_1\|\Phi\|_\infty\\ \nonumber
    &&\geq \Big|\Big\langle\sum_{j=1}^Me(m_jx)w(m_j),\Phi(x)\Big\rangle\Big|-\sum_{\beta\in\Gamma}\Big|\Big\langle\sum_{\substack{n\in\cn_1, m\in B}}\frac{a_n}{n} e(\beta mnx), \Phi(x)\Big\rangle\Big|\\ \nonumber
    &&> c\sum_{j=1}^M\frac{|w(m_j)|}{j}-o(1),
\end{eqnarray}
as desired.
\end{proof}


\section{When $A$ is not geometric}

In this section, we consider the case when the host set $A$ is geometrically distributed, in the sense that $|A\tri 3\cdot A|\gg N^c$ for some positive constant $c>0$. We will focus on finding the largest $(2,4)$-sum-free in $A$. Let $\Omega_1=(1/6,1/3)\subseteq \RR/\ZZ$, and let $\Omega_2=(2/3,5/6)\subseteq \RR/\ZZ$. It is clear that both $\Omega_1$ and $\Omega_2$ are $(2,4)$-sum-free in $\RR/\ZZ$. Let $\mathbbm{1}_{\Omega_t}$ be the indicator function of $\Omega_t$ for $t=1,2$. Given $A\subseteq \NN^{>0}$ of size $N$, let $\M_{(2,4)}(A)$ be the size of the maximum $(2,4)$-sum-free subset of $A$. Again we have
\begin{align*}
    \M_{(2,4)}(A)\geq\max_{x\in\RR/\ZZ}\sum_{n\in A}\mathbbm{1}_{\Omega_t}(nx), 
\end{align*}
for $t=1,2$. We introduce the balanced function $f_t:\RR/\ZZ\to\CC$ defined by $f_t=\mathbbm{1}_{\Omega_t}-\frac{1}{6}$. Hence,
\[
 \widehat{f_t}(n)=
 \begin{cases}
 0\quad & \text{ if } n=0,\\
 \widehat{\mathbbm{1}_{\Omega_t}}(n) & \text{ otherwise}.
 \end{cases}
\]
When $n>0$, the Fourier coefficient $\wh{f}_t(n)$ is
\begin{align*}
    \widehat{f_t}(n)&=\int_{\TT}\mathbbm{1}_{\Omega_t}(x)e(-nx)d\mu(x)\\
    &=\frac{1}{2\pi in}\Big(e\big(-\frac{(t-1)n}{2}-\frac{n}{3}\big)-e\big(-\frac{(t-1)n}{2}-\frac{n}{6}\big)\Big)\\
   &=\frac{1}{\pi n}e\Big(-\frac{(2t-1)n}{4}\Big)\sin\Big(\frac{n\pi}{6}\Big).
\end{align*}
Hence, for $t=1,2$ we have
\begin{align*}
    &f_t(x)=\sum_{n\neq0}\widehat{f_t}(n)e(nx)=\sum_{n\neq0}\frac{1}{\pi n}e\Big(-\frac{(2t-1)n}{4}\Big)\sin\Big(\frac{n\pi}{6}\Big)e(nx).
\end{align*}
Denote by 
\begin{equation}
\label{gt}
    g_t(x)=\sum_{n\in A}f_t(nx).
\end{equation}
We will prove that either $\|g_1\|_1\gg \log N/\log\log N$ or $\|g_2\|_1\gg \log N/\log\log N$. However, it seems hard to estimate $\|g_t\|_1$ directly. In order to get around this difficult, we consider their sum $f_1+f_2$ and difference $f_1-f_2$. Let $\Gamma(x):=f_1(x)+f_2(x)$ be the sum so that
\begin{equation}
\label{Gamma}
    \Gamma(x)=\frac{2}{\pi}\sum_{n\geq1}\frac{(-1)^{n}}{n}\sin\Big(\frac{n\pi}{3}\Big)\cos(4\pi nx).
\end{equation}
Also, we let $\Lambda(x)=f_1(x)-f_2(x)$ be the difference and let 
\[
\gamma(n)=\begin{cases}
1&\text{ when }n\equiv1\pmod4,\\
-1&\text{ when }n\equiv3\pmod4,\\
0&\text{ otherwise,}
\end{cases}
\]
so that we can express $\La(x)$ as
\begin{align}\label{eq: Gx}
    \Lambda(x)&=\frac{4}{\pi}\sum_{n\geq1}\frac{\gamma(n)}{n}\sin\Big(\frac{n\pi}{6}\Big)\sin(2\pi nx)\nonumber\\
    &=\frac{4}{\pi}\Bigg(\sum_{\substack{n\geq1\\ 3\nmid n, 2\nmid n}}\frac{1}{2n}\sin(2\pi nx)-\sum_{\substack{n\geq1\\ 3\mid n,2\nmid n}}\frac{1}{n}\sin(2\pi nx)\Bigg).
\end{align}

We first deal with the function $\Gamma(x)$. Recall that $\cn_1$ is the set of positive integers $m$ such that $m$ only contains prime factors larger than $Q\asymp(\log N)^{100}$. We also define $\cn_2$ be the set of square-free integers generated by primes that are at most $Q$. Since $(-1)^n\sin(n\pi/3)=-\sqrt3\chi(n)/2$ where $\chi(n)$ is a multiplicative character mod $3$, we can sieve out the small prime factors in \eqref{Gamma} by
\begin{equation}
\label{Gamma-sieved}
    \sum_{t\in \cn_2}\frac{\mu(t)\chi(t)}{t}\sum_{m\in A}\Gamma(mtx)=-\frac{\sqrt3}{\pi}\sum_{m\in A}\Bigg(\cos(4\pi mx)+\sum_{n\in \cn_1}\frac{\chi(n)}{n}\cos(4\pi nmx)\Bigg),
\end{equation}
where $\mu$ is the M\"obius function.

Next, we consider $\Lambda(x)$. Since the coefficients $\gamma(n)\sin(n\pi/6)$ are not multiplicative, $\La(x)$ is more difficult to handle. As shown in equation \eqref{eq: Gx}, $\La(x)$ can be partitioned into two parts according to the divisibility by the number 3. This motivates us to first sieve out those integers $n$ that $3 \!\mid\! n$, by a restricted M\"obius function defined only on integers divisible by $3$. In this way, except for the first term, all other terms with significant contribution in the second part cancel out, while the first part remains unchanged. Then, we use another sieve for the first part in a similar fashion. It turns out that we can combine these two steps to one by using the M\"obius function directly as our sieve. In fact,
\begin{align*}
  \sum_{m\in\cn_2}\frac{\mu(m)}{m}\Lambda(mx)
  =\frac{4}{\pi}\sum_{n\geq1}\frac{1}{n}\sin(2\pi nx) \sum_{\substack{m\in\cn_2, m\mid n}}\mu(m)\gamma\Big(\frac{n}{m}\Big)\sin\Big(\frac{n\pi}{6m}\Big).
\end{align*}
Depending on the divisibility of $n$ by 3 and $9$, the term $\sum_{\substack{m\in\cn_2, m\mid n}}\mu(m)\gamma(\frac{n}{m})\sin(\frac{n\pi}{6m})$ has the expression

\begin{align*}
    I_1^o(n)=\sum_{m\in\cn_2, m\mid n}\frac12\mu(m), \hspace{.5cm}\text{if }3\nmid n, 2\nmid n,\quad I_1^e(n)=\sum_{m\in\cn_2, m\mid n,2\mid m}\frac12\mu(m), \hspace{.5cm}\text{if }3\nmid n, 2\mid n\\
    I_2^o(n)=\sum_{m\in\cn_2,m\mid n,3\nmid m}-\mu(m)+\sum_{m\in\cn_2,m\mid n,3\mid m}\frac{1}{2}\mu(m),\hspace{.5cm}\text{if }3\mid n, 2\nmid n,\text{ but }9\nmid n\qquad\\
    I_2^e(n)=\sum_{m\in\cn_2,m\mid n,3\nmid m,2\mid m}-\mu(m)+\sum_{m\in\cn_2,m\mid n,6\mid m}\frac{1}{2}\mu(m),\hspace{.5cm}\text{if }6\mid n,\text{ but }9\nmid n\qquad\\
   I_3^o(n)=\sum_{m\in\cn_2, m\mid n}-\mu(m),\hspace{.5cm}\text{if }9\mid n,2\nmid n,\quad I_3^e(n)=\sum_{m\in\cn_2, m\mid n, 2\mid m}-\mu(m),\hspace{.5cm}\text{if }9\mid n, 2\mid n 
\end{align*}
By the inclusive-exclusive principle, for $n\notin \cn_1\cup 2\cdot \cn_1\cup 3\cdot \cn_1\cup 6\cdot \cn_1$, $I_1^{o,e}(n)$ is always $0$ unless $n=1,2$, and $I_3^{o,e}(n)$ is always $0$. For $I_2^{o,e}(n)$, note that
\[
\sum_{\substack{m\in\cn_2,m\mid n\\ 3\mid m}}\frac{1}{2}\mu(m)=\sum_{\substack{m\in\cn_2,m\mid n\\ 3\nmid m}}-\frac{1}{2}\mu(m),
\]
which implies that $I_2^{o,e}(n)$ is $0$ unless $n=3,6$. Therefore, we get
\begin{align}
&\Lambda_1(x):=\sum_{t\in\cn_2}\sum_{m\in A}\frac{\mu(t)}{t}\Lambda(tmx)\label{eq:Gx1}\\
=&\, \frac{4}{\pi}\sum_{m\in A}\Bigg(\frac{1}{2}\sin(2\pi mx)-\frac{1}{4}\sin(4\pi mx)-\frac{1}{2}\sin(6\pi mx)+\frac{1}{4}\sin(12\pi mx) \nonumber\\
&\,+\sum_{n\in\cn_1\cup2\cdot\cn_1\cup3\cdot\cn_1\cup6\cdot\cn_1}\frac{\eta(n)}{n}\sin(2\pi nmx)\Bigg), \nonumber
\end{align}
where $\eta$ is defined as
\[
\eta(n)=
\begin{cases}
\frac{1}{2}&\text{ when }n\in\cn_1,\\
-\frac{1}{2}&\text{ when }n\in2\cdot \cn_1,\\
-\frac{3}{2}&\text{ when }n\in3\cdot \cn_1,\\
\frac{3}{2}&\text{ when }n\in6\cdot \cn_1.
\end{cases}
\]
Note that $\Lambda_1$ indeed has the expression
\begin{align*}
    \Lambda_1(x)=&\, \frac{2}{\pi}\sum_{m\in A}\Bigg(\sin(2\pi mx)-\frac{1}{2}\sin(4\pi mx)-\sin(6\pi mx)+\frac{1}{2}\sin(12\pi mx)\\
    &\,+\sum_{n\in\N_1}\frac{1}{n}\Big(\sin(2\pi nmx)-\frac{1}{2}\sin(4\pi nmx)-\sin(6\pi nmx)+\frac{1}{2}\sin(12\pi nmx)\Big)\Bigg).
\end{align*}
Let $B=A\tri 3\cdot A$, so by our assumption on the ambient set $A$, $|B|\gg N^c$. For any number $m\in A\tri (3\cdot A)$, we define
\begin{equation*}
    \eps(m)=\begin{cases}
    1&\text{ when }m\in A\setminus (3\cdot A),\\
    -1&\text{ when }m\in (3\cdot A) \setminus A,\\
    0&\text{ otherwise.}
\end{cases}
\end{equation*}
and
\begin{equation*}
    \eps'(m)=\begin{cases}
    -\frac{1}{2}&\text{ when }m\in (2\cdot A)\setminus (6\cdot A),\\
    \frac{1}{2}&\text{ when }m\in (6\cdot A) \setminus (2\cdot A),\\
     0&\text{ otherwise.}
\end{cases}
\end{equation*}
We can thus simplify $\Lambda_1(x)$ as
\begin{align}\label{eq: G1}
    \Lambda_1(x)=&\,\frac{2}{\pi}\sum_{m\in B}\Big(\eps(m)\sin(2\pi mx)+ \sum_{n\in\cn_1}\frac{\eps(m)}{n}\sin(2\pi nmx)\Big) \\
    &\, +\frac{2}{\pi}\sum_{m\in 2\cdot B}\Big(\eps'(m)\sin(2\pi mx)+ \sum_{n\in\cn_1}\frac{\eps'(m)}{n}\sin(2\pi nmx)\Big).\nonumber
\end{align}
Another important observation is that $(\eps+\eps')$ is supported on $B\cup (2\cdot B)$ and $|(\eps+\eps')(m)|\geq 1/2$ on its support. 

Finally, we combine \eqref{Gamma-sieved} and \eqref{eq: G1} to get
\begin{align}
\nonumber
    &-\frac{2\sqrt3 \pi}{3}\sum_{t\in \cn_2}\frac{\mu(t)\chi(t)}{t}\sum_{m\in A}\Gamma(mtx)+\frac{2\sqrt3 \pi}{3}\sum_{t\in \cn_2}\frac{\mu(t)\chi(t)}{t}\sum_{m\in A}\Gamma(3mtx)\\ \nonumber
    &+\frac{\sqrt3 \pi}{3}\sum_{t\in \cn_2}\frac{\mu(t)\chi(t)}{t}\sum_{m\in A}\Gamma(2mtx)-\frac{\sqrt3 \pi}{3}\sum_{t\in \cn_2}\frac{\mu(t)\chi(t)}{t}\sum_{m\in A}\Gamma(6mtx)\\ \nonumber
    &+\frac{i\pi}{2}\sum_{t\in\cn_2}\frac{\mu(t)}{t}\sum_{m\in A}\Lambda(2tmx)\\ \nonumber
    =\ &\sum_{m\in B\cup(2\cdot B)}\Bigg((\epsilon+\eps')(m)\cos(4\pi mx)+\sum_{n\in\cn_1}\frac{\chi(n)(\eps+\eps')(m)}{n}\cos(4\pi nmx)\Bigg)\\ \nonumber
    &+i\sum_{m\in B\cup(2\cdot B)}\Bigg((\epsilon+\eps')(m)\sin(4\pi mx)+\sum_{n\in\cn_1}\frac{(\eps+\eps')(m)}{n}\sin(4\pi nmx)\Bigg)\\
     \nonumber
    =\ &\sum_{m\in B\cup(2\cdot B)}e^{4\pi imx}(\eps+\eps')(m)\\ \label{final}
    &+\sum_{m\in B\cup(2\cdot B), n\in\cn_1}\frac{(\eps+\eps')(m)}{n}\Big((\chi(n)+1)e^{4\pi nmx}+ (\chi(n)-1)e^{-4\pi nmx}\Big).
\end{align}
Denote also
\begin{equation}
\label{la-a-ga-a}
    \La_A(x)=\sum_{m\in A}\La(mx), \hspace{.5cm}\Ga_A(x)=\sum_{m\in A}\Ga(mx).
\end{equation}
Now we can employ Corollary~\ref{cor: key} and the triangle inequality to \eqref{final} to obtain 
\begin{align*}
  4\sum_{t\in\cn_2}\frac{|\mu(t)|}{t}\big\Vert \Gamma_A\big\Vert_{L^1(\RR/\ZZ)}+ \sum_{t\in\cn_2}\frac{|\mu(t)|}{t}\big\Vert \Lambda_A\big\Vert_{L^1(\RR/\ZZ)}\gg\log N.
\end{align*}
Mertens' estimate tells us
\[
\sum_{t\in\cn_2}\frac{1}{t}\ll \prod_{p<Q}\Big(1+\frac{1}{p}\Big)\asymp \log\log N. 
\]
Hence we have
\[
\max\big\{ \big\Vert \Gamma_A\big\Vert_{L^1(\RR/\ZZ)}, \big\Vert \Lambda_A\big\Vert_{L^1(\RR/\ZZ)}\big\}\gg\frac{\log N}{\log\log N}.
\]
This implies that there is $t\in\{1,2\}$, such that $\Vert g_t\Vert\gg\log N/\log\log N$, and since $g_t$ is balanced, we get $\max_{x\in\RR/\ZZ}g_t(x)\gg\log N/\log\log N$. \medskip

With all tools in hand we are going to prove our main theorem. 
\begin{proof}[Proof of Theorem~\ref{thm:main}]
Fix $k\geq1$. We first assume $|A\tri 3\cdot A|\gg N^{1/2}$. By the result proved earlier in this section, we may assume that for $\Omega_1=(1/6,1/3)$, there is $x_0\in \RR/\ZZ$ such that 
\[
\sum_{n\in A}\mathbbm{1}_{\Omega_1}(nx_0)\geq\frac{N}{6}+c\frac{\log N}{\log\log N},
\]
where $c>0$ is an absolute constant. Consider the continuous group homomorphism $\chi:\RR/\ZZ\to\RR/\ZZ$ with $\chi(x)=kx$ for every $x$. Then the Bohr set $\chi^{-1}(\Omega_1)$ is a union of $k$ disjoint open intervals $I_1,\dots,I_k$ in $\RR/\ZZ$, each of which has measure $1/6k$. It is also easy to see that $I_t$ is $(2k,4k)$-sum-free for every $1\leq t\leq k$. Indeed, suppose that $I_1$ is not $(2k,4k)$-sum-free, then there are $6k$ elements $a_1,\dots,a_{2k},b_1,\dots,b_{4k}$ in $I_1$ such that $\sum_{j=1}^{2k} a_j=\sum_{j=1}^{4k} b_j$. We may assume $a_1\leq \dots \leq a_{2k}$ and $b_1\leq \dots \leq b_{4k}$. Define $\alpha_r=\tfrac{1}{k}\sum_{j=rk+1}^{(r+1)k}a_j$, and $\beta_s=\tfrac{1}{k}\sum_{j=sk+1}^{(s+1)k}b_j$ for all $r\in\{0,1\}$ and $s\in\{0,1,2,3\}$. Since $I_1$ is an interval, $\alpha_0,\alpha_1$ and $\beta_0,\dots,\beta_3$ all belong to $I_1$, and $\sum_{j=0}^1\alpha_j=\sum_{j=0}^3\beta_j$. Now, using the fact that $\chi$ is a group homomorphism, we have $\sum_{j=0}^1\chi(\alpha_j)=\sum_{j=0}^3\chi(\beta_j)$, and this contradicts the fact that $\Omega_1$ is $(2,4)$-sum-free. 

Pick $x_1$ such that $x_0=kx_1$. By pigeonhole principle and the fact $\mathbbm{1}_{\Om_t}(kx)=\mathbbm{1}_{I_1}(x)+\cdots+\mathbbm{1}_{I_k}(x)$, there is $t_0\in\{1,\dots,k\}$ such that
\[
\sum_{n\in A}\mathbbm{1}_{I_t}(nx_1)\geq\frac{N}{6k}+\frac{c}{k}\frac{\log N}{\log\log N},
\]
this finishes the proof of the first case.

Now let us assume $|A\tri 3\cdot A|\ll N^{1/2}$. By Lemma \ref{lem: geometric implies lacunary}, 
 $A$ is at least a $(3,1/2)$-lacunary set. Let $\Omega=(1/3,2/3).$ By Proposition~\ref{prop: main lemma section 2, second form}, there is $y_0\in\RR/\ZZ$, such that 
\[
\sum_{n\in A}\mathbbm{1}_{\Omega}(ny_0)\geq\frac{N}{3}+cN^{1/8},
\]
for some constant $c>0$. Let $y_1=y_0/2k$, then similarly there is an open interval $I\subseteq \RR/\ZZ$, such that $I$ has length $1/6k$, $I$ is $(2k,4k)$-sum-free, and
\[
\sum_{n\in A}\mathbbm{1}_{I}(ny_1)\geq\frac{N}{6k}+\frac{c}{2k}N^{1/8},
\]
this finishes the proof. 
\end{proof}

\section*{Acknowledgements}
The authors thank George Shakan for carefully reading the first draft of the paper and for making many useful comments. They would also like to thank the anonymous referee for a detailed report with many helpful suggestions.


\bibliographystyle{amsplain}
\bibliography{ref}

\end{document}